\documentclass[reqno]{amsart}

\usepackage{amssymb}
\usepackage{enumitem}
\usepackage{xcolor}
\usepackage{hyperref}

 \sloppy

\newtheorem*{thmn}{Theorem}
\newtheorem*{mthmn}{Main theorem}
\newtheorem*{questn}{Question}

\newtheorem{thm}{Theorem}[section]
\newtheorem{cor}[thm]{Corollary}
\newtheorem{lem}[thm]{Lemma}
\newtheorem{prop}[thm]{Proposition}
\theoremstyle{definition}

\newtheorem{defn}[thm]{Definition}
\newtheorem{defnr}[thm]{Definition \& Remark}
\newtheorem{ex}[thm]{Example}
\newtheorem{rem}[thm]{Remark}

\newtheorem*{setting}{Main setting}

\numberwithin{equation}{section}



\newcommand{\itdef}[1]{\textit{#1}\index{#1}}
\newcommand{\itdefn}[1]{\itdef{#1}}

\newcommand{\bdefn}{\begin{defn}}
\newcommand{\edefn}{\end{defn}}
\newcommand{\bdefnr}{\begin{defnr}}
\newcommand{\edefnr}{\end{defnr}}
\newcommand{\benum}{\begin{enumerate}[label=(\arabic*),leftmargin=1.8em]}
\newcommand{\eenum}{\end{enumerate}}
\newcommand{\bthmn}{\begin{thmn}}
\newcommand{\ethmn}{\end{thmn}}
\newcommand{\bpropn}{\begin{propn}}
\newcommand{\epropn}{\end{propn}}
\newcommand{\bthm}{\begin{thm}}
\newcommand{\ethm}{\end{thm}}
\newcommand{\bnota}{\begin{nota}}
\newcommand{\enota}{\end{nota}}
\newcommand{\bproof}{\begin{proof}}
\newcommand{\eproof}{\end{proof}}
\newcommand{\bprop}{\begin{prop}}
\newcommand{\eprop}{\end{prop}}
\newcommand{\bcor}{\begin{cor}}
\newcommand{\ecor}{\end{cor}}
\newcommand{\blem}{\begin{lem}}
\newcommand{\elem}{\end{lem}}
\newcommand{\brem}{\begin{rem}}
\newcommand{\erem}{\end{rem}}
\newcommand{\bex}{\begin{ex}}
\newcommand{\eex}{\end{ex}}

\newcommand{\ba}{\begin{array}}
\newcommand{\ea}{\end{array}}
\newcommand{\bea}{\begin{eqnarray}}
\newcommand{\eea}{\end{eqnarray}}
\newcommand{\bean}{\begin{eqnarray*}}
\newcommand{\eean}{\end{eqnarray*}}

\newcommand{\cbb}{\mathbb{C}}

\newcommand{\lbb}{\mathbb{L}}
\newcommand{\nbb}{\mathbb{N}}

\newcommand{\rbb}{\mathbb{R}}

\newcommand{\CC}{\mathbb{C}}
\newcommand{\RR}{\mathbb{R}}

\newcommand{\ccal}{\mathcal{C}}

\newcommand{\ncal}{\mathcal{N}}

\newcommand{\qpscv}{$q$-pseudo\-concave}

\newcommand{\ph}{pluri\-harmonic}
\newcommand{\sph}{sub\-pluri\-harmonic}
\newcommand{\subph}{sub\-pluri\-harmonic}
\newcommand{\psh}{pluri\-sub\-harmonic}
\newcommand{\spsh}{strictly pluri\-sub\-harmonic}

\newcommand{\qpsh}{$q$-pluri\-sub\-harmonic}

\newcommand{\sqpsh}{strictly $q$-pluri\-sub\-harmonic}

\newcommand{\qpscvy}{$q$-pseudo\-concavity}

\newcommand{\pscv}{pseudo\-concave}
\newcommand{\hol}{holo\-morphic}
\newcommand{\qhol}{$q$-holo\-morphic}
\newcommand{\psc}{pseudo\-convex}
\newcommand{\spsc}{strictly pseudo\-convex}
\newcommand{\qpsc}{$q$-pseudo\-convex}
\newcommand{\pscy}{pseudo\-con\-vexity}
\newcommand{\qpscy}{$q$-pseudo\-con\-vexity}

\newcommand{\nbh}{neighborhood}

\newcommand{\cont}{continuous}
\newcommand{\usc}{upper semi-continuous}

\newcommand{\fct}{function}
\newcommand{\fcts}{functions}

\renewcommand{\and}{\ \mathrm{and}\ }
\newcommand{\qand}{\quad \mathrm{and} \quad}

\newcommand{\ol}[1]{\overline{#1}}

\newcommand{\relc}{\Subset}

\newcommand{\repa}{\mathrm{Re}}

\newcommand{\nach}{\rightarrow}

\newcommand{\eps}{\varepsilon}
\newcommand{\vphi}{\varphi}
\newcommand{\vrho}{\varrho}

\newcommand{\fe}{ \ \mathrm{for\ every}\ }

\newcommand{\D}{\displaystyle}
\newcommand{\cl}[1]{\ol{#1}}


\title[Foliations of continuous q-pseudoconcave graphs]{Foliations of continuous q-pseudoconcave graphs}

\author[T. Pawlaschyk]{Thomas Pawlaschyk}
\address{Thomas Pawlaschyk, School of Mathematics und Natural Sciences, University of Wuppertal, Gaussstr. 20, 42119 Wuppertal, Germany}
\email{pawlaschyk@uni-wuppertal.de}

\author[N. Shcherbina]{Nikolay Shcherbina}
\address{Nikolay Shcherbina, School of Mathematics und Natural Sciences, University of Wuppertal, Gaussstr. 20, 42119 Wuppertal, Germany}
\email{shcherbina@math.uni-wuppertal.de}

\keywords{q-plurisubharmonic functions, q-pseudoconcave sets,
singularity sets, foliations by holomorphic manifolds.}

\subjclass[2010]{Primary 32F10, 32D20; Secondary 37F75.}

\begin{document}
\begin{abstract} 

We show that for $k = 0, 1$ the graph of a continuous mapping $f:D \to \rbb^k\times\cbb^p$, defined on a domain $D$ in $\cbb^n\times\rbb^k$, is locally foliated by complex $n$-dimensional submanifolds if and only if its complement is $n$-pseudoconvex (in the sense of Rothstein) relatively to $(D\times\rbb^k)\times\cbb^p\subset \cbb^{n}\times\cbb^k\times\cbb^p$ . 

\end{abstract}

\maketitle

\section*{Acknowledgment} Research of the first author was supported by the Deutscher Akademischer Austauschdienst
(DAAD) and Conacyt M\'exico under the PPP Proalmex Project No. 51240052; and by the Deutsche Forschungsgemeinschaft (DFG) under the grant SH 456/1-1, {\it Pluripotential Theory, Hulls and
Foliations}. The authors wish to thank the anonymous referee for valuable comments and suggestions which improved the presentation of the paper. This version of the article has been accepted for publication in Indiana University Mathematics Journal, after peer review but is not the final published version and does not reflect post-acceptance improvements, or any corrections. The final version is available online at: \href{https://doi.org/10.1512/iumj.2022.71.9010}{https://doi.org/10.1512/iumj.2022.71.9010}.


\section{Introduction}

The notion of pseudoconvexity and the problem of existence of a complex structure on a given set play important roles in complex analysis. That is why it is natural to look for a possible link between them and to raise the following (maybe a bit naive) question.



\begin{questn} Let $U \subset V$ be two domains in ${\mathbb C}^n$, $ n \geq 2$, and let $U$ be pseudoconvex relatively to $V$, i.e., for every point $z$ in $V \cap \partial U$ there is a ball $B_r(p)$ centered at $p$ such that $U \cap B_r(p)$ is \psc. Does it follow that the set $\Gamma := V \setminus U$ possesses a holomorphic structure?

Here, by the existence of a holomorphic structure on a locally closed set $\Gamma \subset {\mathbb C}^n$ we mean that for every point $z \in \Gamma$ there is a complex analytic variety $ A_z \ni z$ of positive dimension such that $A_z \subset \Gamma$.

\end{questn}

The answer to this question is, in general, negative. Examples, in the case $n=2$, of a ``thin'' (of topological dimension two) and ``thick'' (of topological dimension three) sets $\Gamma$ with pseudoconvex complements but without holomorphic structure were constructed in \cite{Wermer} and  \cite{Sh82}, respectively. 

However, if we restrict ourselves to a more specific choice of the set $\Gamma$, namely, to the case when $\Gamma$ has the structure of a graph, then the situation will become rather different. The first classical and fascinating result in this direction for graphs of real codimension two which are merely continuous is due to Hartogs \cite{Har2} and comes back to 1909.

\begin{thm}\label{thm-hartogs-pscv}  Let $f:B\to\cbb_\zeta$ be a continuous function defined on the unit ball $B \subset\cbb^n_z$. Then the complement of its graph $\Gamma(f)=\{(z,\zeta): z \in B, \zeta=f(z)\}$  in $B\times\cbb_\zeta$ is a domain of holomorphy if and only if the function $f$ is holomorphic. 
\end{thm}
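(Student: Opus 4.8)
\emph{The easy direction.} Suppose first that $f$ is \hol. Then $g(z,\zeta):=\zeta-f(z)$ is \hol\ on $B\times\cbb_\zeta$ and vanishes exactly on $\Gamma(f)$, so $1/g$ is \hol\ on the complement $\Omega:=(B\times\cbb_\zeta)\setminus\Gamma(f)$ and is singular at every boundary point lying on $\Gamma(f)$. Since $B$ is a ball and $\cbb_\zeta$ is all of $\cbb$, the product $B\times\cbb_\zeta$ is a domain of holomorphy; removing the zero set of the \hol\ function $g$ from a domain of holomorphy again yields a domain of holomorphy (indeed $-\log|g|$ is \psh\ on $\Omega$ and tends to $+\infty$ along $\Gamma(f)$, which upgrades an exhaustion of $B\times\cbb_\zeta$ to one of $\Omega$). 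Hence $\Omega$ is a domain of holomorphy.

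\emph{The hard direction, reduction to one variable.} Assume now that $\Omega$ is a domain of holomorphy, hence \psc\ by the solution of the Levi problem, and let us deduce that $f$ is \hol. The intersection of a \psc\ domain with a complex affine subspace is again \psc\ in that subspace (the restriction of a \psh\ exhaustion is a \psh\ exhaustion). Fixing a complex line $L\subset\cbb^n_z$, the slice $\Omega\cap(L\times\cbb_\zeta)$ is therefore \psc\ and equals the complement of the graph of $f|_{L\cap B}$, a continuous function of one complex variable. If the one-variable case of the theorem is established, then $f$ is \hol\ along every complex line through $B$; since $f$ is continuous, Osgood's lemma (continuity together with separate holomorphy in the coordinate directions) then gives that $f$ is \hol. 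Thus everything reduces to the case $n=1$.

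\emph{The one-variable case.} Here I would exploit the continuity principle (Kontinuit\"atssatz) for the \psc\ domain $\Omega\subset\cbb_z\times\cbb_\zeta$. For a subdisc $D\relc B$ and a function $g$ \hol\ near $\ol D$, the set $\{(z,g(z)+t):z\in\ol D\}$ is an analytic disc, and letting $t$ run along a path in $\cbb$ produces a continuous family of such discs with boundaries $\{(z,g(z)+t):z\in\partial D\}$. Writing $h:=f-g$, the Kontinuit\"atssatz forces the constraint that if $0\notin h(\ol D)$ and the path avoids $h(\partial D)$, then its endpoint also avoids $h(\ol D)$. Running this over all $D$ and all $g$ shows that $f$ obeys the maximum and argument principles characteristic of \hol\ maps, namely that $f$ cannot attain in the interior any value lying in a zero-winding component of the complement of its boundary image. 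The main obstacle is the final regularity step: promoting these soft, purely topological constraints on a \emph{merely continuous} $f$ to genuine holomorphy. I would clear it by a regularity argument, for instance by showing that $f$ satisfies Morera's identity $\int_{\partial D} f\,dz=0$ on every small disc (so that $\repa f$ and $\impa f$ are harmonic and the Cauchy--Riemann relations hold); alternatively one may invoke a removable-singularity theorem identifying a closed $2n$-real-dimensional set with \psc\ complement as a complex analytic hypersurface, whose graph presentation over $B$ then has a \hol\ defining function and hence exhibits $f$ as \hol.
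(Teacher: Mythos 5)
The paper itself never proves this statement: it is Hartogs' classical theorem of 1909, quoted with a pointer to \cite[Theorem 2 on p.~226]{Shabat} for a complete proof. So your argument has to stand on its own, and it does not: it has a genuine gap exactly at the one-variable case, which is where the entire content of the theorem is concentrated.

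Your easy direction is correct, and so is the reduction of the hard direction to $n=1$ (slicing a \psc{} domain by complex lines, then Osgood's lemma); the only blemish there is that ``domain of holomorphy $\Rightarrow$ \psc{}'' is the elementary Oka direction, not ``the solution of the Levi problem'' --- you need the actual Levi problem only when you feed the \psc{} slice back into the $n=1$ statement phrased with ``domain of holomorphy''. The gap is in the $n=1$ case itself. What the Kontinuit\"atssatz applied to your vertical translates $\{(z,g(z)+t):z\in\ol{D}\}$ actually yields is the following soft constraint: for every subdisc $D$ and every $g$ \hol{} near $\ol{D}$, the set $(f-g)(\ol{D})$ must miss the \emph{unbounded} component of $\cbb\setminus (f-g)(\partial D)$ (only components reachable from infinity are constrained, not every ``zero-winding component''). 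This family of constraints is not vacuous --- for instance it already excludes $f(z)=\ol{z}$: taking $g(z)=z^2$ and $D=\{|z-\tfrac12|<\tfrac34\}$, the curve $(f-g)(\partial D)$ meets the real axis only at $-\tfrac{5}{16}$, so $0$ lies in the unbounded component of its complement, while $f-g$ vanishes at $z=0$ and $z=1$ inside $D$. But the passage from these purely topological constraints on a merely continuous $f$ to genuine holomorphy --- what you call the ``final regularity step'' --- is precisely where all the difficulty of Hartogs' theorem lives, and you give no argument for it: ``showing that $f$ satisfies Morera's identity'' is a declaration of intent, and nothing in winding-number information produces an integral identity without substantial further work. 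In effect you have proved the easy half and restated the hard half.

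Your fallback route is worse than incomplete: it rests on a false statement. There is no ``removable-singularity theorem identifying a closed $2n$-real-dimensional set with \psc{} complement as a complex analytic hypersurface''; this paper's own introduction recalls why, namely Wermer's example \cite{Wermer} of a set of topological dimension two in $\cbb^2$ whose complement is \psc{} but which carries no analytic variety of positive dimension (and the example in \cite{Sh82} of such a set of dimension three). The graph hypothesis is exactly what cannot be dropped, and exploiting it is the substance of the classical proof: one produces a \hol{} function on the complement which is singular at every point of $\Gamma(f)$ and extracts the holomorphy of $f$ from its fiberwise Laurent/Hadamard data, carried out in all affine frames $\zeta\mapsto\zeta-\ell(z)$ (constants $\ell$ alone do not suffice, as $f(z)=\ol{z}$ shows). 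That is the argument in the reference \cite{Shabat} on which the paper relies.
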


A similar statement holds true for continuous graphs of real codimension one. 

\begin{thm}\label{thm-levi-pscv} Let $f:B'\to\rbb_v$ be a continuous function defined on the unit ball $B'$ in $\cbb^n_z\times\rbb_u$. Then the complement of its graph $\Gamma(f)=\{(z,w):(z,u) \in B',\ v = f(z,u),\ w=u+iv\}$ in $(B'\times\rbb_v) \subset \cbb^n_z\times\cbb_w$ is a disjoint union of two domains of holomorphy if and only if the set $\Gamma(f)$ can be decomposed into a disjoint union of complex analytic hypersurfaces.
\end{thm}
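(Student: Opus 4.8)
The plan is to read the hypothesis ``both components of the complement are domains of holomorphy'' as a \emph{continuous} form of Levi-flatness of $\Gamma(f)$, and to pass between this and the holomorphic foliation through one bridging condition.

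First I would set up and localize. Put $\Omega=B'\times\rbb_v\subset\cbb^n_z\times\cbb_w$; it is itself \psc, since the ball $B'$ is cut out by the strictly \psh\ function $|z|^2+(\repa w)^2-1$. The graph $\Gamma(f)=\{\impa w=f(z,\repa w)\}$ separates $\Omega$ into the connected sets $\Omega^{+}=\{\impa w>f\}$ and $\Omega^{-}=\{\impa w<f\}$, and each is a Hartogs tube: $\Omega^{+}$ is carried into itself by $w\mapsto w+is$ for $s\ge 0$, and $\Omega^{-}$ for $s\le 0$. Because a domain of holomorphy is the same as a \psc\ domain (Oka), because \pscy\ is local on the boundary, and because the part of $\partial\Omega^{\pm}$ lying on $\partial\Omega$ is already \psc, the whole statement reduces to a local equivalence at each $p\in\Gamma(f)$ between local \pscy\ of both sides and the presence of a complex hypersurface through $p$ inside $\Gamma(f)$; the global decomposition will then follow by collecting the local leaves, using that $\Gamma(f)$ is a graph.

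The bridge I would use is the condition $(\star)$: \emph{for every holomorphic disk $\lambda\mapsto(z(\lambda),w(\lambda))$ the function $\lambda\mapsto f\big(z(\lambda),\repa w(\lambda)\big)$ is harmonic}. For smooth $f$ this is exactly the vanishing of the Levi form of $\rho=f(z,\repa w)-\impa w$ on the complex tangent bundle of $\Gamma(f)$: that form involves only second derivatives of $f$, so $\Omega^{+}$ \psc\ forces it $\ge 0$ and $\Omega^{-}$ \psc\ forces it $\le 0$, whence $\Gamma(f)$ is Levi-flat and, by Frobenius, foliated by complex hypersurfaces. In the continuous category I would instead prove potential-theoretically that $(\star)$ is equivalent to both-sided \pscy: if $f$ is subharmonic (resp. superharmonic) along disks then $\impa w-f$ (resp. $f-\impa w$) is superharmonic along disks and positive on $\Omega^{+}$ (resp. $\Omega^{-}$), so $-\log$ of it is \psh\ there and that side is \psc; taking $f$ harmonic gives both. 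This is the exact analog of the classical fact that $\{\impa w>\varphi(z)\}$ is \psc\ iff $\varphi$ is \psh, and the converse implication I would extract from the continuity principle applied to disks touching $\Gamma(f)$ from each side.

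It then remains to prove $(\star)\Longleftrightarrow$ foliation, which is where the real work lies. For the easy direction, a foliation gives, locally, a holomorphic family of leaf-graphs $w=\psi_\alpha(z)$ with $\impa\psi_\alpha=f(\cdot,\repa\psi_\alpha)$; these complex hypersurfaces act as barriers confining analytic disks, yielding the continuity principle for $\Omega^{\pm}$ and hence their \pscy. For the hard direction I would construct the leaves by a Bishop/Riemann--Hilbert argument: through $p\in\Gamma(f)$ prescribe a small holomorphic $z$-disk and solve for a holomorphic $w(\lambda)$ whose boundary values satisfy $\impa w=f(z(\lambda),\repa w)$, which is possible for small disks by a contraction using only the continuity of $f$; condition $(\star)$ then forces the disk \emph{interior} onto $\Gamma(f)$ as well, since $\impa w(\lambda)$ and $f(z(\lambda),\repa w(\lambda))$ are two harmonic functions agreeing on $\partial\mathbb{D}$. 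Varying the $z$-disk sweeps out a complex hypersurface through $p$, and a Bedford--Klingenberg--Shcherbina type analysis assembles these disks into pairwise disjoint leaves covering $\Gamma(f)$. The main obstacle throughout is the mere continuity of $f$: with no Levi form and no Frobenius theorem, both the equivalence of $(\star)$ with \pscy\ and the integration of $(\star)$ into genuine complex-analytic leaves must go through pluripotential theory and disks, and the technical crux is precisely this construction — producing the Riemann--Hilbert disks, proving they are complex-analytic and stay inside $\Gamma(f)$, and gluing them coherently — in which the continuity principle and the Hartogs tube structure of $\Omega^{\pm}$ have to be exploited together.
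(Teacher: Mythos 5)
First, for calibration: the paper does not prove Theorem~\ref{thm-levi-pscv} at all --- it quotes it as a known result, due to Levi for smooth $f$ with $n=1$, to \cite{Sh93} for continuous $f$ with $n=1$, and to \cite{Ch} for $n\geq 2$. So your proposal has to stand as an independent proof, and as such it has a fatal gap: your bridging condition $(\star)$ is wrong. You claim that, for smooth $f$, harmonicity of $\lambda\mapsto f(z(\lambda),\mathrm{Re}\,w(\lambda))$ along \emph{every} holomorphic disk is ``exactly'' the vanishing of the Levi form of $\rho=f(z,\mathrm{Re}\,w)-\mathrm{Im}\,w$ on the complex tangent bundle of $\Gamma(f)$. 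It is not: harmonicity along \emph{all} disks is the vanishing of the \emph{full} Levi form of $\rho$, i.e.\ pluriharmonicity of $(z,w)\mapsto f(z,\mathrm{Re}\,w)$, whereas \pscy{} of $\Omega^{\pm}$ controls the Levi form only on the complex tangent space $H_p\Gamma(f)=\{\xi : \sum_j f_{z_j}(p)\xi_j+\frac{1}{2}(f_u(p)+i)\xi_w=0\}$. These are far apart. Concretely, take $f(z,u)=u^{2}$. Then $\Gamma(f)=\{\mathrm{Im}\,w=(\mathrm{Re}\,w)^{2}\}$ is Levi-flat and is foliated by the complex hyperplanes $\{w=w_0\}$ with $\mathrm{Im}\,w_0=(\mathrm{Re}\,w_0)^{2}$; both sides are domains of holomorphy (the upper side is the sublevel set of the \psh{} function $(\mathrm{Re}\,w)^{2}-\mathrm{Im}\,w$, and the lower side is locally \psc{} along the graph by Levi-flatness and along the rest of its boundary because $B'\times\rbb_v$ is \psc, hence \psc{} by the local character of \pscy). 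Yet for the disk $z(\lambda)\equiv z_0$, $w(\lambda)=\lambda$ the function $f(z(\lambda),\mathrm{Re}\,w(\lambda))=(\mathrm{Re}\,\lambda)^{2}$ is subharmonic but not harmonic, so $(\star)$ fails. Hence both implications ``two-sided \pscy{} $\Rightarrow(\star)$'' and ``foliation $\Rightarrow(\star)$'', on which your whole scheme rests, are false; in fact $(\star)$ forces $f(z,u)=a(z)+cu$ with $a$ pluriharmonic and $c$ a real constant, i.e.\ exactly those graphs which become pluriharmonic after a linear change of $w$. The classical fact you invoke ($\{\mathrm{Im}\,w>\varphi(z)\}$ is \psc{} iff $\varphi$ is \psh) concerns $\varphi$ \emph{independent} of $u$; the entire difficulty of the theorem is the $u$-dependence of $f$, which your bridge erases.

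Two further steps would also fail even if $(\star)$ were repaired to the correct (tangential) Levi-flatness condition. The Riemann--Hilbert problem $\mathrm{Im}\,w=f(z(\lambda),\mathrm{Re}\,w(\lambda))$ on $\partial\mathbb{D}$ cannot be solved by a contraction ``using only the continuity of $f$'': harmonic conjugation is unbounded on continuous boundary data, and with merely continuous $f$ there is no Lipschitz constant to contract with. And, more importantly, without $(\star)$ nothing forces the interior of an attached disk to lie in $\Gamma(f)$ --- two harmonic functions agreeing on $\partial\mathbb{D}$ agree inside, but $\lambda\mapsto f(z(\lambda),\mathrm{Re}\,w(\lambda))$ is precisely \emph{not} known to be harmonic; this is where the real work sits. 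The actual proofs proceed quite differently: for $n=1$, \cite{Sh93} obtains the foliation through a polynomial-hull/Bedford--Klingenberg type filling of the two \psc{} sides by analytic disks, with delicate arguments to handle mere continuity, and \cite{Ch} bootstraps this to $n\geq 2$, much as the cases in Section~\ref{chap-main} of the paper are reduced to lower-dimensional ones. Your easy direction (foliation $\Rightarrow$ \pscy{} of both sides, via leaves as barriers and the continuity principle) is sound in outline --- it is essentially Proposition~\ref{conv-main} of the paper --- but the hard direction of your argument needs to be rebuilt from scratch.
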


The last result was first proved in a fundamental paper of Levi \cite{Levi} from 1911 for smooth graphs in the special case $n=1$.  It was established much later in \cite{Sh93} that if we drop the smoothness assumption on $f$ and only demand continuity, then the statement of Theorem~\ref{thm-levi-pscv} still holds true in the case $n=1$. Later, based on this result, the same statement for continuous $f$  was proved in \cite{Ch} in the case $n \geq 2$.

\begin{rem} More details on the theorems of Hartogs and Levi and their complete proofs can be found in  \cite[Theorem 2 on p.226]{Shabat} and \cite[Levi's theorem on p.164]{Vladimirov}, respectively.
\end{rem}

The main purpose of the present paper is to provide a generalization of  Theorems~\ref{thm-hartogs-pscv} and ~\ref{thm-levi-pscv} to the case of graphs of higher codimension, i.e., to the case when the target space has dimension larger than one. In this case the requirement of pseudoconvexity of the complement of $\Gamma(f)$ is not a proper condition any more. Indeed, in view of the Hartogs' theorem on removability of compact singularities (see, for example, \cite[Theorem 3 on p.172]{Shabat}) applied to the vertical fibers in the case of a higher codimensional generalization of  Theorem~\ref{thm-hartogs-pscv},  or, in view of Kontinuit\"atssatz (see, for example, \cite[Chapter III, Section 17]{Vladimirov}) in the case of a higher codimensional generalization of  Theorem~\ref{thm-levi-pscv}, the complement of the set $\Gamma(f)$ can never be pseudoconvex. However, it turns out that if we substitute the condition of pseudoconvexity of the complement by its $n$-pseudocovexity in the sense of Rothstein \cite{Ro} (for the definition and more details on this notion see Section 3), then the following generalization of Theorems~\ref{thm-hartogs-pscv} and ~\ref{thm-levi-pscv} holds true (see Theorem~\ref{thm-pscv-fol} below).

\begin{mthmn} Let $n,k,p$ be integers with $n \geq 1$, $p \geq 0$ and let $k \in \{0,1\}$ such that $N=n+k+p \geq 2$. Let $B''$ be the unit ball in $\cbb^n_z\times\rbb^k_u$ and let $f: B'' \rightarrow \rbb^k_v \times \cbb^p_\zeta$ be a \cont \ mapping. Then the complement of its graph $\Gamma(f)$ in $B''\times\rbb^k_v \times \cbb^p_\zeta \subset \cbb^N_{z,u+iv,\zeta}$ is  $n$-\psc{} in the sense of Rothstein if and only if $\Gamma(f)$ is foliated by $n$-dimensional complex submanifolds.
\end{mthmn}

Notice that our new result corresponds to the case $n=k=p=1$, while the rest is either already known, as mentioned before, or can be derived from it. Results in the case $n\geq1$, $k \geq 2$, $p \geq 0$ are known to us only in the smooth setting (see Proposition \ref{prop-k2}). Notice also that $q$-pseudoconvexity of sets in $\cbb^n$ is the same as $(n-q-1)$-pseudoconvexity in the sense of S{\l}odkowski \cite{Sl2} and $(n-q)$-convexity in the sense of Grauert and Andreotti \cite{AG} (in the smooth setting).

The paper is organized as follows. In Sections \ref{chap-qpsh} and \ref{chap-qpsc} we collect some (mainly known) facts on $q$-\psh\ \fcts\ and \qpsc\ sets. In Section \ref{chap-duality} we present results on the dual relation between Levi \qpsc\ sets and (non-)\qpsc\ sets, which will form the essential tools to prove the main theorem. In the last Section \ref{chap-main} we prove our main result, Theorem~\ref{thm-pscv-fol}.

These results were obtained jointly by the authors of this paper and were included in the doctoral thesis \cite{TPTHESIS} of the first author. They have not been published yet, since we intended to proceed with further investigations in hope to prove more general statements. Meanwhile, Takeo Ohsawa generalized Nishino's rigidity theorem \cite{Nishino} using $L^2$-methods. One of his generalizations states that an $n$-convex complex manifold $M^{n+m}$ fibered over $\mathbb{D}^m$ $\big(\mathbb{D}=\{z \in \cbb : |z|<1\}\big)$ is biholomorphically equivalent to $\mathbb{D}^m\times \cbb\mathbb{P}^n_*$ if the fibers are equivalent to $\cbb\mathbb{P}^n_*$, i.e.~once-punctered $\cbb\mathbb{P}^n$ (see \cite{Oh}). As an immediate consequence he derives a statement similar to ours in the case $k=0$. As it appears that our result is more general ($k=0,1$) and its verification uses different techniques, we felt motivated to finalize our paper and present our results in its original form given in the above mentioned thesis.


\section{On $q$-Plurisubharmonic Functions}\label{chap-qpsh}

In this section, we introduce the \qpsh\ \fcts\ in the sense of Hunt and Murray \cite{HM}. A collection of results on \qpsh\ \fcts\ can be found in \cite{Dieu} and in the doctoral thesis \cite{TPTHESIS} of the first author. 

\bdefn \label{def-qpsh}\label{defn-sph} Let $q\in\{ 0,\ldots,n-1\}$ and let $\psi$ be an \usc\ \fct\ on an open set $\Omega$ in $\cbb^n$.
\benum

\item The \fct\ $\psi$ is called \itdefn{\sph} \textit{on $\Omega$} if for every ball $B \relc U$ and every \fct\ $h$ which is \ph\ on a \nbh\ of $\cl{B}$ with $\psi \leq h$ on $\partial B$ we already have that $\psi \leq h$ on $\cl{B}$.

\item The \fct\ $\psi$ is \textit{\qpsh}\ on $\Omega$ if $\psi$ is \sph\ on $\Pi \cap \Omega$ for every complex affine subspace $\Pi$ of dimension $q+1$.


\item If $q \geq n$, then every \usc\ \fct\ on $\Omega$ is by convention $q$-\psh.


\item An \usc\ \fct\ $\psi$ on $\Omega$ is called \textit{strictly \qpsh}\ \textit{on $\Omega$} if for every $\ccal^\infty$-smooth non-negative \fct\ $\theta$ with compact support in $\Omega$ there is a positive number $\eps_0$ such that $\psi+\eps\theta$ remains \qpsh\ on $\Omega$ for every real number $\eps$ with $|\eps|\leq\eps_0$.

\eenum
\edefn

Now we provide a non-trivial example of a \subph\ \fct. Examples of smooth \qpsh\ \fcts\ can be easily derived from Theorem~\ref{smooth-qpsh} below.

\bex \label{ex-sph-norm} \label{ex:subph}

Let $\Omega$ be an arbitrary domain in $\CC^n$. Then it was proved in \cite[Example 2.8]{TPESZ2} that for any complex norm $\|\cdot\|$ on $\CC^n$ the \fct
\[
\CC^n\setminus\{0\} \ni z \mapsto -\log \|z\|
\]
and the induced boundary distance function 
\[
\Omega \ni z\mapsto d_{\|\cdot\|}(z,\partial \Omega) := -\log \inf\{\|z-w\|: w \in \partial\Omega\}
\]
are always \sph.
\eex

We give a list of properties of \qpsh\ \fcts.

\bprop \label{prop-qpsh} Every below mentioned \fct\ is defined on an open set $\Omega$ in $\cbb^n$ unless otherwise stated.
\benum

\item \label{prop-qpsh-psh-sph} The $0$-\psh\ \fcts\ are exactly the \psh\ \fcts, and the $(n-1)$-\psh\ \fcts\ are the \sph\ \fcts.

\item \label{prop-qpsh-q+1-psh} Every \qpsh\ \fct\ is $(q+1)$-\psh.

\item \label{qpsh-sum}\cite{Sl2} If $\psi$ is \qpsh\ and $\varphi$ is $r$-\psh, then $\psi + \varphi$ is $(q+r)$-\psh.



\item  \label{prop-qpsh-local} \cite{HM} An \usc\ \fct\ $\psi$ is \qpsh\ on $\Omega$ if and only if it is locally \qpsh\ on $\Omega$, i.e., for each point $p$ in $\Omega$ there is a \nbh\ $U$ of $p$ in $\Omega$ such that $\psi$ is \qpsh\ on $U$.

\item \label{thm-equiv-wqpsh} \cite{Fu2, Dieu} A \fct\ $\psi$ is \qpsh\ on an open set $\Omega$ in $\cbb^n$ if and only if $\psi \circ f$ is \qpsh\ for every holomorphic mapping $f:D \to \Omega$, where $D$ is a domain in $\cbb^{q+1}$ (or even $\cbb^{k}$ with $k \geq q+1$).



\item \cite{HM}\label{prop-qpsh-patch} Let $\Omega_1$ be an open set in $\Omega$, $\psi$ be a \qpsh\ \fct\ on $\Omega$ and $\psi_1$ be a \qpsh\ \fct\ on $\Omega_1$ such that
\[
\limsup_{\substack{w\nach z\\ w \in \Omega_1}}\psi_1(w) \leq \psi(z) \fe z \in \partial\Omega_1 \cap \Omega.
\]
Then the subsequent \fct\ is \qpsh\ on $\Omega$,
\[
\vphi(z):=\left\{\ba{ll} \max\{ \psi(z),\psi_1(z)\}, & z \in \Omega_1\\ \psi(z), & z \in \Omega\setminus \Omega_1 \ea\right. .
\]

\eenum
\eprop

A smooth (strictly) \qpsh\ \fct\ can be characterized by counting the eigenvalues of its complex Hessian matrix.

\defn \label{defn-levi-form} Let $\psi$ be twice differentiable at a point $p$. For $X,Y \in \cbb^n$ we define the \textit{Levi form}\index{Levi!form} \textit{of $\psi$ at $p$} by
\[
\mathcal{L}_\psi(p)(X,Y):=\sum_{k,l=1}^n \frac{\partial^2 \psi}{\partial z_k \partial \ol{z}_l}(p)X_k \ol{Y}_l.
\]

We have the following characterization of smooth \qpsh\ \fcts\ (see \cite{HM}). A generalization to almost everywhere twice differentiable \qpsh\ \fcts\ is due to S{\l}odkowski \cite{Sl2}.

\bthm\label{smooth-qpsh} Let $q \in \{0,\ldots,n-1\}$ and let $\psi$ be a $\ccal^2$-smooth \fct\ on an open subset $\Omega$ in $\cbb^n$. Then $\psi$ is (strictly) \qpsh\ if and only if the Levi form $\mathcal{L}_\psi(p)$ has at most $q$ negative ($q$ non-positive) eigenvalues at every point $p$ in $\Omega$.
\ethm

In the same paper \cite{HM} the local maximum property was shown.

\begin{thm}[Local maximum property] \label{prop-qpsh-locmax} Let $q\in\{ 0,\ldots,n-1\}$ and $\Omega$ be a relatively compact open set in $\cbb^n$. Then any \fct\ $\psi$ which is \usc\ on $\cl{\Omega}$ and \qpsh\ on $\Omega$ fulfills
\[
\max_{\cl{\Omega}} \psi= \max_{\partial\Omega} \psi.
\]
\end{thm}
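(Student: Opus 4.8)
The plan is to treat the smooth case first, where the statement is transparent, and then reduce the general statement to it.

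\smallskip

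The conceptual core is the \sqpsh\ smooth case. Suppose first that $\psi$ is $\ccal^2$-smooth and \sqpsh\ on a \nbh\ of $\cl{\Omega}$. I claim that such a $\psi$ has no interior local maximum at all. Indeed, at an interior local maximum $p_0$ the real Hessian $H_\psi(p_0)$ is negative semi-definite, and since for every $X\in\cbb^n$ one has $\mathcal{L}_\psi(p_0)(X,X)=\tfrac14\big(H_\psi(p_0)(X,X)+H_\psi(p_0)(iX,iX)\big)$, the Levi form $\mathcal{L}_\psi(p_0)$ is itself negative semi-definite, so it has no positive eigenvalue. On the other hand, by Theorem~\ref{smooth-qpsh} a \sqpsh\ function has at most $q$ non-positive eigenvalues, i.e.\ at least $n-q\geq 1$ positive ones, at every point. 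This contradiction shows that the maximum of $\psi$ over the compact set $\cl{\Omega}$, which is certainly attained somewhere, cannot be attained in $\Omega$; hence it is attained on $\partial\Omega$, which is exactly the assertion in this case (no connectedness argument is needed).

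\smallskip

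Next I would remove strictness and then smoothness. For a smooth but merely \qpsh\ $\psi$ I would apply the previous step to the perturbations $\psi+\eps|z|^2$: the \fct\ $|z|^2$ is strictly \psh, so by the sum rule (Proposition~\ref{qpsh-sum}) these are \qpsh, and adding $\eps I$ to the Levi form turns the $\geq n-q$ non-negative eigenvalues into positive ones, so that by Theorem~\ref{smooth-qpsh} they are in fact \sqpsh. The smooth strict case gives $\max_{\cl{\Omega}}(\psi+\eps|z|^2)=\max_{\partial\Omega}(\psi+\eps|z|^2)$, and letting $\eps\to0$ yields the claim for smooth \qpsh\ $\psi$. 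Finally, for an arbitrary \usc\ \qpsh\ $\psi$ I would approximate it from above by a decreasing sequence of smooth \qpsh\ \fcts\ $\psi_j\downarrow\psi$, apply the smooth case to each $\psi_j$ on a slightly smaller domain $\Omega_j\relc\Omega$, and pass to the limit, using \uscy\ of $\psi$ on $\cl{\Omega}$ to control the boundary maxima.

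\smallskip

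The hard part will be precisely this regularization step. Unlike \psh\ \fcts, the class of \qpsh\ \fcts\ with $q\geq 1$ is \emph{not} stable under convolution (the sum of two \qpsh\ \fcts\ is only $2q$-\psh\ by Proposition~\ref{qpsh-sum}), so one cannot smooth $\psi$ naively by mollification, and producing a monotone smooth \qpsh\ approximation is the genuine obstacle. I would circumvent it by invoking S{\l}odkowski's theorem \cite{Sl2} that a \qpsh\ \fct\ is twice differentiable almost everywhere with the eigenvalue count of Theorem~\ref{smooth-qpsh} valid a.e., and replacing the clean Hessian argument above by a viscosity/Jensen--Alexandrov argument: after perturbing by $\eps|z|^2$ to create a strict interior maximum, one slides a test quadratic to touch $\psi$ at a nearby point of twice differentiability and runs the eigenvalue contradiction there. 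Everything else — the reduction and the eigenvalue computation — is routine.
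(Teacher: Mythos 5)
Your Steps 1 and 2 are correct: the Hessian-averaging identity $\mathcal{L}_\psi(p_0)(X,X)=\tfrac14\big(H_\psi(p_0)(X,X)+H_\psi(p_0)(iX,iX)\big)$ is right, and the passage from the strict to the non-strict smooth case via $\psi+\eps|z|^2$ is legitimate by Proposition~\ref{prop-qpsh}~\ref{qpsh-sum} and Theorem~\ref{smooth-qpsh}. The gap is your Step 3, and it is not a technicality. You correctly observe that monotone smooth approximation of \qpsh\ \fcts\ is unavailable for $q\geq1$ (this is a genuine, known obstruction), but the fallback you propose rests on a false attribution: S{\l}odkowski \cite{Sl2} does \emph{not} prove that a \qpsh\ \fct\ is twice differentiable almost everywhere; he proves that the eigenvalue criterion of Theorem~\ref{smooth-qpsh} remains valid \emph{within} the class of a.e.\ twice differentiable \fcts\ (this is exactly what the remark preceding Theorem~\ref{smooth-qpsh} in the paper means). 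The a.e.\ differentiability itself can fail dramatically: already for $q=0$, the \psh\ \fct\ $\psi(z)=\sum_k 2^{-k}\log|z-a_k|$, with $(a_k)$ dense in the ball, is unbounded below on every open set and hence nowhere even first-order differentiable. So your ``nearby point of twice differentiability'' need not exist, and the sliding argument has nothing to touch. A repair along your lines would require first replacing $\psi$ by a semiconvex sup-convolution (and proving that this operation preserves \qpshy) so that Alexandrov's theorem applies; none of this is in your sketch.

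In fact, once you have perturbed by $\eps|z|^2$, no regularity of $\psi$ is needed at all, because the touching \fct\ is already smooth and \qpshy\ can be tested against it directly from Definition~\ref{def-qpsh}. Suppose $\max_{\cl{\Omega}}\psi>\max_{\partial\Omega}\psi$. By Proposition~\ref{prop-qpsh}~\ref{qpsh-sum} the \fct\ $\psi+\eps|z|^2$ is still \qpsh, and for small $\eps>0$ its maximum $M_\eps$ over $\cl{\Omega}$ is attained at some $p_0\in\Omega$; thus $\psi\leq M_\eps-\eps|z|^2$ on $\Omega$ with equality at $p_0$. Pick any complex affine subspace $\Pi\ni p_0$ of dimension $q+1\leq n$ and write, for $z\in\Pi$,
\[
M_\eps-\eps|z|^2=h(z)-\eps|z-p_0|^2,
\qquad h(z):=M_\eps-\eps|p_0|^2-2\eps\,\mathrm{Re}\langle z-p_0,p_0\rangle ,
\]
where $h$ is \ph. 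On the relative sphere $\partial B_r(p_0)\cap\Pi$ of a small ball this gives $\psi\leq h-\eps r^2$, and the right-hand side is \ph; since $\psi$ is \sph\ on $\Pi\cap\Omega$ by Definition~\ref{def-qpsh}, the inequality propagates to the whole ball and yields $\psi(p_0)\leq h(p_0)-\eps r^2=\psi(p_0)-\eps r^2$, a contradiction. This slice argument replaces your entire Step 3 and in fact makes Steps 1 and 2 unnecessary. For comparison: the paper itself gives no proof of this theorem but quotes it from Hunt and Murray \cite{HM}, so your proposal cannot be matched against an argument in the text; judged on its own, it stands or falls with the false a.e.-differentiability claim.
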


S{\l}odkowski generalized the previous results to analytic sets in Proposition~5.2 and Corollary~5.3 of \cite{Sl}. We will need both of them later on but first, we recall the notion of the dimension of an analytic subset $A$ in the complex Euclidean space. For more details we refer to \cite{Chirka}.

\bdefn Let $A$ be an analytic subset of $\CC^n$. Given a point $p$ in $A$, the \textit{dimension of $A$ at $p$} is defined by
\[
\dim_p A:=\limsup_{\substack{z \to p \\ z \in A^{*}}} \ \dim_z A,
\]
where $A^{*}$ denotes the set of all regular points of $A$ and $\dim_z A$ is the dimension of the complex manifold $A \cap U$ in a \nbh\ $U$ of the regular point $z \in A^{*}$. 
\edefn

\begin{thm}[Local maximum principle for analytic sets]\label{prop-loc-max-analyt} Fix an integer number $q \in \{0,\ldots,n-1\}$. Let $A$ be an analytic subset of an open set $\Omega$ in $\cbb^n$ with $\dim_z A \geq q+1$ for all $z \in A$ and let $\psi$ be a \qpsh\ \fct\ on $A$, i.e. for every point $z \in A$ the \fct\ $\psi$ extends to a \qpsh\ \fct\ on some open \nbh\ of $z$ in $\Omega$. Then for every compact set $K$ in $A$ we have that
\[
\max_K \psi = \max_{\partial_AK} \psi.
\]
Here, by $\partial_A K$ we mean the relative boundary of $K$ in $A$.
\end{thm}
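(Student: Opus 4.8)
The plan is to transfer the classical maximum principle for \subh\ \fcts\ to the analytic set $A$ through the holomorphic pull-back characterization of \qpsh\ \fcts. Recall from Proposition~\ref{prop-qpsh} that for any holomorphic map $g:D\to\Omega$ with $D$ a domain in $\cbb^{q+1}$ the composition $\psi\circ g$ is \qpsh\ on $D\subset\cbb^{q+1}$, and, since $(m-1)$-\psh\ \fcts\ on $\cbb^m$ are precisely the \subh\ ones, $\psi\circ g$ is in fact \subh. Thus the restriction of $\psi$ to any $(q+1)$-dimensional complex submanifold of $A$ is \subh\ in holomorphic parameters; more generally, on the regular part $A_{\mathrm{reg}}$, reading $\psi$ in a holomorphic chart onto a domain $G\subset\cbb^m$ (with $m=\dim A\geq q+1$) and composing the local \qpsh\ extension of $\psi$ with the chart inverse yields, by the same pull-back characterization, a genuine \qpsh\ \fct\ $\tilde\psi$ on $G\subset\cbb^m$. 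To prove the statement I would argue by contradiction: set $M:=\max_K\psi$ and $F:=\{z\in K:\psi(z)=M\}$, which is compact and nonempty by \uscy, and assume $F\cap\partial_AK=\emptyset$, i.e.\ $F$ lies in the relative interior $U:=K\setminus\partial_AK$.

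The core local mechanism is a propagation property on $A_{\mathrm{reg}}$. Let $z_0\in F\cap A_{\mathrm{reg}}\cap U$ and pick a holomorphic chart $\Phi$ from a \nbh\ of $z_0$ in $A$ onto $G\subset\cbb^m$, $m\geq q+1$, with $\Phi(z_0)=0$ and $\cl{B}\subset G$ for a small ball $B$ whose preimage stays in $U$. Applying Theorem~\ref{prop-qpsh-locmax} to $\tilde\psi=\psi\circ\Phi^{-1}$ on $B$ gives $\max_{\cl{B}}\tilde\psi=\max_{\partial B}\tilde\psi$; since $\tilde\psi(0)=M$ already realizes the maximum on $\cl{B}$, we obtain $\max_{\partial B}\tilde\psi=M$, so every small sphere around $z_0$ carries another point of $F$. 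Hence a regular interior maximum point is never isolated in $F$, and the maximum set propagates outward: by a connectedness/chaining argument along $A_{\mathrm{reg}}$, a component of $F$ meeting the regular part must accumulate at $\partial_AK$ unless it first runs into the singular locus $\mathrm{Sing}(A)$, contradicting $F\subset U$.

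The main obstacle is precisely the singular locus: the propagation along $A_{\mathrm{reg}}$ can stall at $\mathrm{Sing}(A)$, and a priori $F$ could even be contained in $\mathrm{Sing}(A)$. Since $\mathrm{Sing}(A)$ is a proper analytic subset of strictly smaller dimension, its dimension may drop below $q+1$, so the statement cannot simply be re-applied to it; this is the delicate technical heart of the argument (due to S{\l}odkowski). I would resolve it by a local branched-covering argument: near a point $p\in A$ with $\dim_pA=m\geq q+1$, after a linear change of coordinates the projection $\pi:\cbb^n\to\cbb^m$ realizes $A$ locally as a finite branched cover of a polydisc $\Delta\subset\cbb^m$, and one shows that the fibrewise maximum $\Psi(w):=\max\{\psi(z):z\in\pi^{-1}(w)\cap A\}$ is again \qpsh\ on $\Delta\subset\cbb^m$ (this push-forward step is exactly where the slice characterization of Proposition~\ref{prop-qpsh} and a Riemann-type removability across the branch locus enter). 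Granting this, Theorem~\ref{prop-qpsh-locmax} applied to $\Psi$ on $\Delta$ delivers the local maximum principle for $\psi$ through singular points as well, and the patching/propagation argument above then forces $F$ to reach $\partial_AK$, the desired contradiction. Equivalently, one may organize this as an induction on $\dim A$, using the branched cover to lower the ambient dimension at each step.
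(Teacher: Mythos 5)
The paper does not actually prove Theorem~\ref{prop-loc-max-analyt}; it quotes it from S{\l}odkowski \cite[Proposition~5.2 and Corollary~5.3]{Sl}. Your proposal therefore has to stand on its own, and it contains two genuine gaps.

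The first gap is the ``propagation/chaining'' step, and the error is seeded by a misidentification at the very start: $(m-1)$-\psh\ \fcts\ on $\cbb^m$ are the \sph\ ones (Proposition~\ref{prop-qpsh}~\ref{prop-qpsh-psh-sph}), \emph{not} the \subh\ ones. For $m\geq 2$ these classes differ (e.g.\ $-|z_1|^2$ is \sph\ on $\cbb^2$ but not \subh), and \sph\ \fcts\ obey no sub-mean-value inequality and no strong maximum principle: their maximum sets need not be open. Consequently, applying Theorem~\ref{prop-qpsh-locmax} to small chart balls only tells you that no point of $F$ is isolated in $F$. That is strictly weaker than what you need: a round circle inside $U$ is a compact set without isolated points that accumulates neither at $\partial_AK$ nor at $\mathrm{Sing}(A)$, and nothing in your argument excludes such an $F$ --- ruling it out is exactly the nontrivial local-to-global content of the theorem. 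Nor can you close the gap by applying Theorem~\ref{prop-qpsh-locmax} to a \nbh\ of a whole component of $F$, since that theorem is stated for open subsets of $\cbb^n$, while such a \nbh\ is an open subset of the manifold $A_{\mathrm{reg}}$ that in general fits in no single chart. The standard repair is a strictness perturbation: replace $\psi$ by $\psi_\eps:=\psi+\eps|z|^2$ (ambient norm restricted to $A$). For small $\eps$ every maximum point of $\psi_\eps$ over $K$ still lies in $U$, while in any chart $\psi_\eps$ becomes the sum of a \qpsh\ \fct\ and a smooth strictly \psh\ one (the chart inverse is a holomorphic embedding), hence strictly \qpsh\ by Proposition~\ref{prop-qpsh}~\ref{qpsh-sum} with $r=0$; and a strictly \qpsh\ \fct\ can have no interior local maximum at all --- add a small bump peaking at the maximum point and contradict Theorem~\ref{prop-qpsh-locmax}, the same mechanism the paper uses in the proof of Lemma~\ref{lem-sb}. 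A single maximum point of $\psi_\eps$ then gives the contradiction; no chaining is needed.

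The second gap is that the push-forward lemma --- that $\Psi(w)=\max\{\psi(z):z\in\pi^{-1}(w)\cap A\}$ is \qpsh\ on $\Delta\subset\cbb^m$ --- is the actual heart of the theorem, and you assume it. Away from the branch locus it does follow from local sections, Proposition~\ref{prop-qpsh}~\ref{thm-equiv-wqpsh}, and the fact that a maximum of finitely many \qpsh\ \fcts\ is \qpsh. But the extension across the branch locus requires a removable-singularity theorem for \qpsh\ \fcts\ along analytic sets, which appears nowhere in the paper and is genuinely delicate for $q\geq1$: a $(q+1)$-dimensional slice may lie entirely inside the branch locus, \usc\ regularization does not commute with slicing, and convolution smoothing is unavailable for \qpsh\ \fcts. ``Granting'' this step is, in essence, granting S{\l}odkowski's Proposition~5.2 itself. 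Note finally that even if it is granted, your conclusion still routes through the invalid chaining argument; combined with the perturbation above it would not have to: splitting $|z|^2=|w|^2+|z''|^2$ along the cover exhibits the push-forward of $\psi_\eps$ as a \qpsh\ \fct\ plus $\eps|w|^2$, hence strictly \qpsh\ on $\Delta$, with an interior maximum --- an immediate contradiction.
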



\section{On $q$-Pseudoconvex Sets}\label{chap-qpsc}

Several characterizations of \qpscy\ in $\cbb^n$ can be found in the literature. We may refer, for example, to \cite{Ro}, \cite{Fu0}, \cite{EVS}, \cite{Sl} and \cite{Ma}. Out of these notions we need the \qpscy\ in the sense of Rothstein \cite{Ro} which is based on generalized Hartogs figures. Later, it has been studied by \cite{EVS} and \cite{Su} who named it \textit{Hartogs \qpscy}. To distinguish the different notions of \qpscy\ in our paper we keep this notation.

\bdefn\label{HaFi}

\benum
\item  We write $\Delta^n_r:=\Delta_r^n(0)=\{z\in\cbb^n:\max_j|z_j|<r\}$ for the polydisc with radius $r>0$ and $A^n_{r,R}:=\Delta^n_R\setminus\cl{\Delta^n_r}$.

\item Let $n\geq 2$ and $q \in \{1,2,\ldots,n-1\}$ be fixed integers, and let $r$ and $R$ be real numbers in the interval $(0,1)$. An
\emph{Euclidean $(n-q,q)$ Hartogs figure} $H_e$ is the set
\bean
H_e:=\big(\Delta^{n-q}_1\times\Delta^q_r\big) \cup \big(A^{n-q}_{R,1}\times\Delta^q_1\big) \ \subset \ \Delta^{n-q}_1 \times \Delta^q_1=\Delta^n_1.
\eean


\item A pair $(H,P)$ of domains $H$ and $P$ in $\cbb^n$ with $H \subset P$ is called a \textit{(general) $(n-q,q)$ Hartogs figure} if there is an Euclidean $(n-q,q)$ Hartogs figure~$H_e$ and a biholomorphic mapping $F$ from $\Delta^n_1$ onto $P$ such that $F(H_e)=H$.

\item An open set $\Omega$ in $\cbb^n$ is called \textit{Hartogs $q$-pseudoconvex} if it admits the Kontinuit\"atssatz with respect to the $(n-q)$-dimensional polydiscs, i.e., given any $(n-q,q)$ Hartogs figure $(H,P)$ such that $H\subset \Omega$, we already have that $P\subset\Omega$.

\eenum
\edefn




Notice that in regards to the classical Kontinuit\"atssatz, a set in $\CC^n$ is Hartogs $(n-1)$-pseudoconvex if and only if it is pseudoconvex.


A proof of the following statement and an extended list of notions which are equivalent to $q$-pseudoconvexity can be found in \cite{TPESZ2} or in \cite{TPTHESIS}.

\begin{thm}\label{equivqpsc}\label{prop-qpsc}\label{eqivqpsc}
Let $q \in \{1,2,\ldots,n-1\}$ and $\Omega$ be an open set in $\cbb^n$. Then the following statements are all equivalent.
\benum

\item\label{eqivqpsc1} $\Omega$ is Hartogs $q$-\psc.




\item\label{eqivqpsc5} $\Omega$ is $(n-q-1)$-\psc\ in the sense of S{\l}odkowski \cite{Sl}, i.e., the function $z \mapsto -\log d(z,\partial\Omega)$ is $(n-q-1)$-\psh\ on $\Omega$. Here, $d(z,\partial\Omega)$ denotes the Euclidean distance from $z$ to the boundary of $\Omega$.

\item \label{eqivqpsc3} For every complex norm $\|\cdot\|$ on $\CC^n$ the \fct\ 
\[
z \mapsto -\log d_{\|\cdot\|}(z,b\Omega)=-\log \inf\{\|z-w\|: w \in b\Omega\}
\]
is $(n-q-1)$-\psh\ on $\Omega$.

\item\label{eqivqpsc4} There exists an $(n-q-1)$-\psh\ exhaustion \fct\ $\Phi$ on $\Omega$, i.e., for every $c \in \rbb$ the set $\{z \in \Omega : \Phi(z) < c\}$ is relatively compact in $\Omega$.


\item\label{eqivqpsc7} Let $\{A_t\}_{t\in[0,1]}$ be a family of analytic subsets in some open set $U$ in $\cbb^n$ that continuously depend on $t$ in the Hausdorff topology. Assume further that $\dim_z A_t \geq n-q$ for every $z \in A_t$ and $t \in [0,1]$, and that the closure of $\bigcup_{t\in[0,1]}A_t$ is compact. If $\Omega$ contains the boundary $\partial A_1$ and the closure $\overline{A_t}$ for each $t\in[0,1)$, then the closure $\cl{A_1}$ also lies in $\Omega$.

\item\label{eqivqpsc9} For every point $p$ in $\partial\Omega$ there is a ball $B_r(p)$ centered at $p$ such that $\Omega \cap B_r(p)$ is \qpsc.



\eenum
\end{thm}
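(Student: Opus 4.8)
The plan is to run a single cycle through all six conditions,
\[
\ref{eqivqpsc1}\Rightarrow\ref{eqivqpsc9}\Rightarrow\ref{eqivqpsc3}\Rightarrow\ref{eqivqpsc5}\Rightarrow\ref{eqivqpsc4}\Rightarrow\ref{eqivqpsc7}\Rightarrow\ref{eqivqpsc1},
\]
writing $q'=n-q-1$ for the S{\l}odkowski index throughout, so that ``$q'$-\psh'' is the recurring regularity and $q'+1=n-q$ is the dimension of the filling objects. Several links are immediate. The implication $\ref{eqivqpsc3}\Rightarrow\ref{eqivqpsc5}$ is trivial, since the Euclidean norm is one of the admissible complex norms. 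For $\ref{eqivqpsc5}\Rightarrow\ref{eqivqpsc4}$ I would take $\Phi=-\log d(\cdot,\partial\Omega)$ when $\Omega$ is bounded, and in general $\Phi=-\log d(\cdot,\partial\Omega)+\|z\|^2$; since $\|z\|^2$ is \psh\ (that is, $0$-\psh), Proposition~\ref{prop-qpsh}\ref{qpsh-sum} keeps $\Phi$ $q'$-\psh, and it is plainly an exhaustion. Finally $\ref{eqivqpsc7}\Rightarrow\ref{eqivqpsc1}$ is a special case, as the $(n-q)$-dimensional polydiscs slicing a Hartogs figure form an admissible family of analytic sets of dimension $n-q=q'+1$.

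For $\ref{eqivqpsc4}\Rightarrow\ref{eqivqpsc7}$ the engine is the local maximum principle for analytic sets, Theorem~\ref{prop-loc-max-analyt}, which applies to the $q'$-\psh\ exhaustion $\Phi$ on each $A_t$ precisely because $\dim_z A_t\geq n-q=q'+1$. I would run a connectedness argument on the parameter: let $T$ be the set of parameters whose corresponding closed analytic set lies in $\Omega$. It is nonempty and relatively open, and it is closed because the boundaries $\partial A_t$ accumulate on $\partial A_1\subset\Omega$, so the identity $\max_{\cl{A_t}}\Phi=\max_{\partial A_t}\Phi$ bounds $\Phi$ uniformly along the family; $\Phi$ being an exhaustion, the closures $\cl{A_t}$ are then trapped in a fixed compact subset of $\Omega$, whence $\cl{A_1}\subset\Omega$. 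The same continuity/exhaustion scheme, applied to the slicing polydiscs of a Hartogs figure (whose shells always lie in $H\subset\Omega$), reproves $\ref{eqivqpsc1}$ directly. The easy direction $\ref{eqivqpsc1}\Rightarrow\ref{eqivqpsc9}$ follows by intersecting with a fixed ball: a ball satisfies the continuity principle in every dimension (its squared distance to the center is \psh\ and restricts to a subharmonic function on every filling disc), and an intersection of two sets each obeying the Kontinuit\"atssatz again obeys it, so $\Omega\cap B_r(p)$ is Hartogs $q$-\psc.

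The genuine obstacle is the closing implication $\ref{eqivqpsc9}\Rightarrow\ref{eqivqpsc3}$: recovering $q'$-\pshy\ of the norm-distance function from the (local) filling property. Here I would invoke the holomorphic-invariance characterization, Proposition~\ref{prop-qpsh}\ref{thm-equiv-wqpsh}: it suffices to check that $-\log d_{\|\cdot\|}(\cdot,\partial\Omega)$ pulls back to a $q'$-\psh\ function under every holomorphic map $f\colon D\to\Omega$ with $D\subset\cbb^{q'+1}=\cbb^{n-q}$, and on such a $D$ being $q'$-\psh\ means exactly being \subph\ (Proposition~\ref{prop-qpsh}\ref{prop-qpsh-psh-sph}). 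Example~\ref{ex-sph-norm} already supplies \uscy\ and the bare \sphy\ on any domain; the task is to upgrade it using the hypothesis. I would argue by contradiction in the style of Oka's lemma: a failure of the sub-mean-value inequality against a \ph\ test \fct\ $h=\repa g$ on a small disc in the source, i.e.\ $-\log d_{\|\cdot\|}\leq h$ on the boundary but $-\log d_{\|\cdot\|}>h$ at an interior point, allows me to build from $g$ and the norm-balls of radius $d_{\|\cdot\|}$ an explicit $(n-q)$-dimensional family of analytic polydiscs whose outer shell lies in $\Omega$ while its core pokes out of $\Omega$. After localizing near the offending boundary point, this is a Hartogs figure violating condition $\ref{eqivqpsc9}$, the desired contradiction.

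Globalizing the resulting local statement is then routine: the locality of $q$-\pshy, Proposition~\ref{prop-qpsh}\ref{prop-qpsh-local}, promotes the near-boundary $q'$-\pshy\ of $-\log d_{\|\cdot\|}$ to all of $\Omega$, and where the near-boundary piece must be reconciled with the interior behavior I would use the patching Proposition~\ref{prop-qpsh}\ref{prop-qpsh-patch}; this yields $\ref{eqivqpsc3}$ and closes the cycle. The main difficulty, and where essentially all the work lies, is the construction of the violating Hartogs figure from a failure of \sphy, together with the bookkeeping that its filling polydiscs carry the correct dimension $n-q=q'+1$ matching the filling dimension in the definition of Hartogs $q$-\pscy.
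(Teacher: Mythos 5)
First, a point of calibration: the paper itself does not prove Theorem~\ref{equivqpsc} at all; it defers to \cite{TPESZ2}, \cite{TPTHESIS} and S{\l}odkowski \cite{Sl}, so your attempt has to be measured against those proofs rather than against anything in this text. Much of your cycle is sound. The links \ref{eqivqpsc3}$\Rightarrow$\ref{eqivqpsc5}$\Rightarrow$\ref{eqivqpsc4}$\Rightarrow$\ref{eqivqpsc7}$\Rightarrow$\ref{eqivqpsc1}$\Rightarrow$\ref{eqivqpsc9} are all correct as sketched. In particular, your claim in \ref{eqivqpsc4}$\Rightarrow$\ref{eqivqpsc7} that the edges $\partial A_t$ accumulate only on $\partial A_1$ is right, though it deserves a justification: each $A_t$ is closed in $U$, so $\partial A_t=\cl{A_t}\cap\partial U$, and Hausdorff convergence of the closures forces every limit of points of $\partial A_t$ to lie in $\cl{A_1}\cap\partial U=\partial A_1$; the uniform bound on $\max_{\partial A_t}\Phi$ and the trapping of the $\cl{A_t}$ in $\{\Phi<c\}\relc\Omega$ then follow as you say. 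Likewise \ref{eqivqpsc1}$\Rightarrow$\ref{eqivqpsc9} works because the Hartogs-figure definition is trivially stable under intersections and your max-principle argument does show a ball is Hartogs $q$-\psc.

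The genuine gap is the closing implication \ref{eqivqpsc9}$\Rightarrow$\ref{eqivqpsc3}, in two places. First, the localization you invoke (``after localizing near the offending boundary point'') is not available in general: if $-\log d_{\|\cdot\|}$ fails to be $(n-q-1)$-\psh\ at a point $z_0$ deep inside $\Omega$, your Oka family of $(n-q)$-dimensional discs starts at a slice-ball near $z_0$ and must travel the full distance $d(z_0,\partial\Omega)$ to the touching point $p^*\in\partial\Omega$; the collar $H$ of any Hartogs figure built from it necessarily contains that slice-ball, so it can never be squeezed into the ball $B_r(p^*)$ that condition~\ref{eqivqpsc9} supplies. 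Such a figure contradicts \emph{global} Hartogs \qpscy, which you do not have at that stage; nor can you substitute a continuity-principle violation for the figure, since within your cycle the implication \ref{eqivqpsc1}$\Rightarrow$\ref{eqivqpsc7} for the local piece $\Omega\cap B_r(p^*)$ only becomes available once the cycle---hence \ref{eqivqpsc9}$\Rightarrow$\ref{eqivqpsc3} itself---is closed, which is circular. Second, your final globalization step proves the wrong statement: gluing near-boundary information to interior data via Proposition~\ref{prop-qpsh}~\ref{prop-qpsh-patch} produces a \emph{new} $(n-q-1)$-\psh\ exhaustion, i.e.\ it yields \ref{eqivqpsc4}, not \ref{eqivqpsc3}; since \qpshy\ is a local property, no patching can certify that $-\log d_{\|\cdot\|}$ \emph{itself} is $(n-q-1)$-\psh\ at interior points. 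Both defects disappear if you reorganize the scheme as the cited sources do: prove the Oka-type implication globally, \ref{eqivqpsc1}$\Rightarrow$\ref{eqivqpsc3}, where no localization is needed and your construction (whose figure-building details remain the analytic core, as you correctly flag) applies verbatim; then deduce \ref{eqivqpsc9}$\Rightarrow$\ref{eqivqpsc4} by applying that global implication to the pieces $\Omega\cap B_r(p)$, on which $d(\cdot,\partial\Omega)$ and $d\big(\cdot,\partial(\Omega\cap B_r(p))\big)$ coincide near $p$, and patching $\max(-\log d,\mathrm{const})+|z|^2$ into an exhaustion. The equivalences then follow from the cycle \ref{eqivqpsc3}$\Rightarrow$\ref{eqivqpsc5}$\Rightarrow$\ref{eqivqpsc4}$\Rightarrow$\ref{eqivqpsc7}$\Rightarrow$\ref{eqivqpsc1}$\Rightarrow$\ref{eqivqpsc3} together with \ref{eqivqpsc1}$\Rightarrow$\ref{eqivqpsc9}$\Rightarrow$\ref{eqivqpsc4}.
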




\brem The equivalence of \ref{eqivqpsc5}, \ref{eqivqpsc7} and \ref{eqivqpsc9} (and even more) already has been verified by S{\l}odkowski \cite[Chapter 4]{Sl}. He also showed that every domain $\Omega$ in $\CC^n$ admits an $(n-1)$-\psh\ exhaustion function and, therefore, is $(n-1)$-\psc\ in his sense. The equivalent property \ref{eqivqpsc3} has been shown in \cite[Propsition 3.3]{TPESZ2}.\erem

In the next section, we will examine subdomains which are \qpsc\ only in a \nbh\ of particular points of their boundary relative to a prescribed larger domain. For this, we introduce the notion of \textit{relative Hartogs \qpscy}\ inspired by \cite[Definition 4.1]{Sl}.

\bdefn \label{defn:relqpsc} Let $U\subsetneq V$ be two domains in $\CC^n$. We say that a $U$ is \textit{Hartogs \qpsc\ relatively to $V$} if for every point $p$ in $V\cap{\partial U}$ there exists an open ball $B_r(p)$ centered in $p$ such that the intersection $U\cap{B}_r(p)$ is Hartogs \qpsc.
\edefn

The next characterization of relative \qpsc\ sets is similar to properties~\ref{eqivqpsc1} and~\ref{eqivqpsc5} of Theorem~\ref{equivqpsc}.

\begin{prop}\label{equiv-rqpsc}
Let $U\subsetneq V$ be open sets in $\cbb^n$. Then the following statements are equivalent.

\benum

\item \label{equiv-rqpsc2} $U$ is Hartogs \qpsc\ relatively to $V$.

\item \label{equiv-rqpsc1} There exist a \nbh\ $W$ of $\partial U \cap V$ in $V$ and a $(n-q-1)$-\psh\ \fct\ $\psi$ on $W \cap U$ such that $\psi(z)$ tends to $+\infty$ whenever $z$ approaches the relative boundary $\partial U \cap V$.
\eenum
\end{prop}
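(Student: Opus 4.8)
The plan is to reduce both implications to the characterizations of Hartogs \qpsc\ sets collected in Theorem~\ref{equivqpsc}, using the boundary distance \fct\ as the bridge between the local geometric condition~\ref{equiv-rqpsc2} and the global analytic condition~\ref{equiv-rqpsc1}. Concretely, I would play off three of the equivalent properties there: Hartogs \qpscy\ (item~\ref{eqivqpsc1}), the existence of an $(n-q-1)$-\psh\ exhaustion \fct\ (item~\ref{eqivqpsc4}), and the $(n-q-1)$-\pshy\ of $z\mapsto-\log d(z,\partial\Omega)$ (item~\ref{eqivqpsc5}). The locality of \qpshy\ (Proposition~\ref{prop-qpsh}\ref{prop-qpsh-local}) will be used to glue local information, and the fact that the maximum of two $(n-q-1)$-\psh\ \fcts\ is again $(n-q-1)$-\psh\ (immediate from the definition of \sphy) to combine two barriers.

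For the implication \ref{equiv-rqpsc2}$\Rightarrow$\ref{equiv-rqpsc1}, I would fix $p\in\partial U\cap V$ together with the ball $B_{r_p}(p)$ supplied by relative Hartogs \qpscy, shrunk so that $\cl{B_{r_p}(p)}\subset V$. Applying Theorem~\ref{equivqpsc} to the open set $U\cap B_{r_p}(p)$ shows that $z\mapsto-\log d\big(z,\partial(U\cap B_{r_p}(p))\big)$ is $(n-q-1)$-\psh\ there. The key elementary observation is that on the concentric half-ball $B_{r_p/2}(p)\cap U$ this \fct\ coincides with $z\mapsto-\log d(z,\partial U)$: for such $z$ the spherical part $\partial B_{r_p}(p)$ of the boundary lies at distance at least $r_p/2$, whereas the nearest point of $\partial U$ lies at distance $<r_p/2$ (since $p\in\partial U$) and hence already inside $B_{r_p}(p)$, so the minimizing boundary point is unaffected by the cut. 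Taking $W$ to be the union over $p$ of these half-balls (intersected with $V$) and invoking locality (Proposition~\ref{prop-qpsh}\ref{prop-qpsh-local}), the globally defined \fct\ $\psi(z):=-\log d(z,\partial U)$ is $(n-q-1)$-\psh\ on $W\cap U$ and tends to $+\infty$ as $z\to\partial U\cap V$, which is precisely~\ref{equiv-rqpsc1}.

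For the converse \ref{equiv-rqpsc1}$\Rightarrow$\ref{equiv-rqpsc2}, I would fix $p\in\partial U\cap V$ and choose $r>0$ so small that $B_r(p)\subset W$ and $\cl{B_r(p)}\subset V$; then $\psi$ is $(n-q-1)$-\psh\ on $B_r(p)\cap U$. The aim is to exhibit an $(n-q-1)$-\psh\ exhaustion \fct\ of $U\cap B_r(p)$ and then conclude Hartogs \qpscy\ of this set via Theorem~\ref{equivqpsc} (item~\ref{eqivqpsc4}$\Rightarrow$\ref{eqivqpsc1}). I would set $\Phi:=\max\{\psi,\,-\log(r^2-|z-p|^2)\}$ on $U\cap B_r(p)$. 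The second term is smooth and strictly \psh\ by an elementary Levi-form computation, hence $(n-q-1)$-\psh, so $\Phi$ is $(n-q-1)$-\psh\ as a maximum of two such \fcts. Since $\psi\to+\infty$ along $\partial U$ (its part of the boundary lies in $\partial U\cap V$ because $\cl{B_r(p)}\subset V$) and $-\log(r^2-|z-p|^2)\to+\infty$ along $\partial B_r(p)$, the \fct\ $\Phi$ blows up at \emph{every} boundary point of the bounded set $U\cap B_r(p)$; hence each sublevel set $\{\Phi<c\}$ stays away from the boundary, is relatively compact, and $\Phi$ is the desired exhaustion \fct.

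The genuinely delicate point is the coincidence of the two distance \fcts\ near the boundary in the first implication: one must verify that, sufficiently close to $p$, the minimizing boundary point for $U\cap B_{r_p}(p)$ always sits on $\partial U$ rather than on the artificial spherical part $\partial B_{r_p}(p)$, so that restricting $U$ to a ball does not corrupt the distance \fct\ to which Theorem~\ref{equivqpsc}\ref{eqivqpsc5} is applied. Everything else is bookkeeping — patching via locality of \qpshy\ in one direction, and an explicit \psh\ barrier for the ball in the other — so I would concentrate the care on the radius choices ($r_p/2$ versus $r_p$) that make this coincidence exact.
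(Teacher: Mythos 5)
Your proof is correct, and it takes the route the paper intends: Proposition~\ref{equiv-rqpsc} is stated in the paper without proof, and the natural argument is exactly your reduction to Theorem~\ref{equivqpsc} --- the coincidence of $-\log d(\cdot,\partial U)$ with $-\log d\big(\cdot,\partial(U\cap B_{r_p}(p))\big)$ on the half-balls combined with locality of \qpshy\ for \ref{equiv-rqpsc2}$\Rightarrow$\ref{equiv-rqpsc1}, and the maximum of $\psi$ with the ball barrier $-\log(r^2-|z-p|^2)$ serving as an $(n-q-1)$-\psh\ exhaustion of $U\cap B_r(p)$ for the converse. The only step deserving one extra line is the initial shrinking of $B_{r_p}(p)$ so that $\cl{B_{r_p}(p)}\subset V$: shrinking is legitimate because intersecting a Hartogs \qpsc\ set with a smaller ball is again Hartogs \qpsc\ (take the maximum of the two log-distance \fcts\ in Theorem~\ref{equivqpsc}~\ref{eqivqpsc5}), but in fact you can omit the shrinking altogether, since your construction of $W$ intersects the half-balls with $V$ anyway and the coincidence argument never uses $B_{r_p}(p)\subset V$.
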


In the case of $V=\cbb^n$, the Hartogs \qpsc\ sets relatively to $\cbb^n$ are simply the Hartogs \qpsc\ sets defined in the beginning of this chapter. Moreover, we have the following examples.

\bex\label{ex-qpsc-sub} 
\benum

\item \label{ex-qpsc-sub-1} Let $\vphi$ be $(n-q-1)$-\psh\ on an open set $V$ in $\cbb^n$ and let $c$ be a real number. Then, by Theorem~\ref{prop-loc-max-analyt}, the set $U=\{z\in V:\vphi(z)<c\}$ is Hartogs \qpsc\ relatively to $V$. If, moreover, the set $V$ is itself Hartogs \qpsc, then $U$ is also Hartogs \qpsc\ (in $\CC^n$) in view of Definition~\ref{HaFi}.

\item \label{ex-qpsc-sub-2} Let $\Omega$ be an open set in $\cbb^n$ and let $h$ be a smooth \qhol\ \fct\ on~$\Omega$ in the sense of Basener \cite{Ba}, i.e., $\overline{\partial} h \wedge (\partial \overline{\partial} h)^{q}=0$. Let $\Gamma(h):=\{(z,h(z)) \in \cbb^{n+1}: z \in \Omega\}$ be the graph of $f$ over $\Omega$. Then the \fct\ $(z,w)\mapsto 1/(h(z)-w)$ is \qhol\ on $U:=(\Omega\times\cbb)\setminus\Gamma(h)$ by \cite{Ba}, so the function $\psi(z,w):=-\log|h(z){-}w|$ is \qpsh\ on $U$ by \cite{HM}. It has the property that $\psi(z,w)$ tends to $+\infty$ whenever $(z,w)$ approaches the graph $\Gamma(h)$. Hence, the open set $U$ is Hartogs $(n-q)$-\psc\ relatively to $V:=\Omega\times\cbb \subset \CC^{n+1}$ by Proposition~\ref{equiv-rqpsc}~\ref{equiv-rqpsc1}. A converse statement is not known except for Hartog's theorem which appears in the holomorphic case $q=0$.
\eenum

\eex

We also need the smoothly bounded \qpsc\ sets which were studied in \cite{Ba}, \cite{HM}, \cite{EVS} and \cite{Su}.

\begin{defn}\label{defn-levi-qpsc} Let $U$ be an open set in $\cbb^n$.
\benum

\item The set $U$ is called \textit{Levi \qpsc}\ (resp. \textit{strictly (Levi) \qpsc}) \textit{at the point $p\in\partial U$} if there exist a \nbh\ $W$ of $p$ and a $\ccal^2$-function $\vrho$ on $W$ such that $\nabla \vrho(p) \neq 0$, $U\cap W = \{z \in W: \vrho(z)<0\}$ and, moreover, such that its Levi form $\mathcal{L}_\vrho$ at $p$ has at most $q$ negative (resp. $q$~non-positive) eigenvalues on the holomorphic tangent space 
\[
H_p \partial U=\{X \in \cbb^n : \sum_{j=1}^n \frac{\partial \vrho}{\partial z_j}(p)X_j=0\}.
\]
(Clearly, the definition of Levi $q$-pseudoconvexity does not depend on the choice of the defining function $\vrho$.)

\item Let $V$ be an open \nbh\ of $U$ with $U \subsetneq V$. Then $U$ is called \textit{(strictly) Levi \qpsc} \textit{in $V$} if it is (strictly) Levi \qpsc\ at every point $p\in \partial U \cap V$. A strictly Levi \qpsc\ set in $\cbb^n$ is also simply called \textit{strictly \qpsc}.

\eenum
\end{defn}

We present some facts about Levi \qpsc\ sets.

\begin{rem} \label{rem-def-fct}
If $U \subset \cbb^n$ is strictly \qpsc\ at a boundary point $p$ and $\psi$ is a defining \fct\ for $U$ at $p$, then for a large enough constant $c>0$ the \fct\ $\exp(c\psi)-1$ is strictly \qpsh\ on some ball $B$ centered at $p$ and still defines $U$ at $p$. In addition, if $U$ is bounded, there exists a global strictly \qpsh\ defining \fct\ $\vrho$ on some \nbh\ of $\overline{U}$ such that $U=\{\vrho <0\}$. This means that $U$ is Hartogs $(n-q-1)$-\psc\ according to Example~\ref{ex-qpsc-sub}~\ref{ex-qpsc-sub-1}.
\end{rem}

The next theorem is due to \cite{EVS} and \cite{Su} and is reformulated in terms of Hartogs \qpscy\ after application of Theorem~\ref{equivqpsc}.

\begin{thm} \label{thm-suria} Let $U$ be a $\mathcal{C}^2$-smoothly bounded domain in $\cbb^n$. Then $U$ is  Levi \qpsc\ if and only if it is Hartogs $(n-q-1)$-\psc.
\end{thm}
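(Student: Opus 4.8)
The statement reconciles a pointwise (smooth-boundary) condition with a global one, so the plan is to treat the two implications separately and to use Theorem~\ref{equivqpsc}~\ref{eqivqpsc5} as a dictionary: $U$ is Hartogs $(n-q-1)$-\psc\ exactly when the \fct\ $u(z):=-\log d(z,\partial U)$ is \qpsh\ on $U$. Throughout, since $\partial U$ is $\mathcal{C}^2$, the signed boundary distance $\vrho:=-d(\cdot,\partial U)$ (negative inside $U$) is a $\mathcal{C}^2$ defining \fct\ on a one-sided collar of $\partial U$, and there $u=-\log(-\vrho)$ satisfies
\[
\mathcal{L}_u(z)(X,X)=\frac{1}{|\vrho(z)|}\,\mathcal{L}_\vrho(z)(X,X)+\frac{1}{\vrho(z)^{2}}\,\Big|\sum_{j}\tfrac{\partial\vrho}{\partial z_j}(z)\,X_j\Big|^{2}.
\]

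For the implication ``Hartogs $\Rightarrow$ Levi'' I would argue by contraposition, using only that restricting a Hermitian form to a subspace cannot increase its number of negative eigenvalues. Assuming $U$ Hartogs $(n-q-1)$-\psc, the dictionary gives that $u$ is \qpsh, so by Theorem~\ref{smooth-qpsh} the form $\mathcal{L}_u(z)$ has at most $q$ negative eigenvalues at every $z$ in the collar. On the complex tangent space $T_z=\{X:\sum_j\tfrac{\partial\vrho}{\partial z_j}(z)X_j=0\}$ the rank-one term in the displayed identity vanishes, so $\mathcal{L}_u(z)|_{T_z}=\tfrac{1}{|\vrho(z)|}\,\mathcal{L}_\vrho(z)|_{T_z}$ has the same inertia as $\mathcal{L}_\vrho(z)|_{T_z}$. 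If some $p\in\partial U$ were not Levi \qpsc, then $\mathcal{L}_\vrho(p)|_{H_p\partial U}$ would carry at least $q+1$ strictly negative eigenvalues; by continuity of $\vrho\in\mathcal{C}^2$ the restricted form $\mathcal{L}_\vrho(z)|_{T_z}$ keeps at least $q+1$ negative eigenvalues for interior $z$ near $p$, and hence so does the larger form $\mathcal{L}_u(z)$, contradicting the bound from Theorem~\ref{smooth-qpsh}. Thus $U$ is Levi \qpsc.

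For ``Levi $\Rightarrow$ Hartogs'' the plan is to settle the strict case first and then reduce to it. If $U$ is strictly Levi \qpsc, Remark~\ref{rem-def-fct} furnishes a global \sqpsh\ defining \fct\ for $U$, and Example~\ref{ex-qpsc-sub}~\ref{ex-qpsc-sub-1} then shows directly that $U$ is Hartogs $(n-q-1)$-\psc. For the general case I would exhaust $U$ from inside by $\mathcal{C}^2$-smoothly bounded \emph{strictly} Levi \qpsc\ domains $U_j\nearrow U$: each $U_j$ is Hartogs $(n-q-1)$-\psc\ by the strict case, so $-\log d(\cdot,\partial U_j)$ is \qpsh; as $U_j\nearrow U$ these decrease to $-\log d(\cdot,\partial U)$, and a decreasing limit of \qpsh\ \fcts\ is again \qpsh, whence $U$ is Hartogs $(n-q-1)$-\psc\ by the dictionary once more. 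The main obstacle is the construction of this strictly Levi \qpsc\ exhaustion: since the tangential Levi form is an invariant of the hypersurface and not of the defining \fct, strictness can only be gained by genuinely moving the boundary. The naive choice $U_\eps=\{\vrho+\eps|z|^{2}<0\}$ does turn the tangential form strictly definite, but it simultaneously displaces $\partial U$ by an amount of order $\eps$, and for a merely $\mathcal{C}^2$ boundary the resulting $O(\eps)$ perturbation of the continuous Levi form can overwhelm the $+\eps$ gain exactly along the directions where $\mathcal{L}_\vrho|_{H_p\partial U}$ degenerates. Taming this competition — for instance by first approximating $U$ in $\mathcal{C}^2$ by Levi \qpsc\ domains whose Levi form is Lipschitz, and only then adding the strictly \psh\ bump — is the delicate point; it is precisely the smooth analysis of \cite{EVS} and \cite{Su}, and once the exhaustion is in hand the limiting argument above completes the proof.
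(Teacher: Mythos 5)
Your ``Hartogs $\Rightarrow$ Levi'' direction is correct and complete: the dictionary of Theorem~\ref{equivqpsc}~\ref{eqivqpsc5}, the Hessian identity for $-\log(-\vrho)$, the equality of inertia on the complex tangent space, and the persistence of $q+1$ negative eigenvalues by $\ccal^2$-continuity are all sound. (For calibration: the paper contains no proof of Theorem~\ref{thm-suria} at all; it is imported from \cite{EVS} and \cite{Su} and only rephrased through Theorem~\ref{equivqpsc}.) The genuine gap is in the converse direction, where all of the analytic content of the theorem is concentrated in a step you do not prove: the existence of an exhaustion $U_j\nearrow U$ by $\ccal^2$-bounded \emph{strictly} Levi \qpsc\ domains. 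Once that exhaustion is granted, your reduction (Remark~\ref{rem-def-fct} plus Example~\ref{ex-qpsc-sub}~\ref{ex-qpsc-sub-1} for the strict case, then decreasing limits of \qpsh\ \fcts, a standard fact though not listed in Proposition~\ref{prop-qpsh}) is indeed easy; but the exhaustion itself is essentially equivalent to the theorem, and every natural construction of it is circular: sublevel sets of $-\log d(\cdot,\partial U)+\eps|z|^2$ require knowing that $-\log d(\cdot,\partial U)$ is \qpsh, which is exactly the conclusion, and the inner parallel domains $\{d>\delta\}$ require the same second-order analysis of $d$ that constitutes the actual proof. Your proposed repair (first approximate $U$ in $\ccal^2$ by Levi \qpsc\ domains with Lipschitz Levi form, then add a strictly \psh\ bump) is not a construction, and no reason is given why such approximants exist. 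Moreover, the appeal to \cite{EVS} and \cite{Su} misdescribes them: those papers do not produce strict exhaustions; they derive the result from a direct analysis of the Levi form of the boundary distance \fct\ at interior points near $\partial U$ in terms of the Levi form of $\partial U$ at the nearest boundary point --- precisely the work your outline postpones.

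If you want a self-contained argument that stays inside this paper, prove the hard direction by contraposition using Theorem~\ref{lemNotpsc}, whose proof is independent of Theorem~\ref{thm-suria}. Assume the bounded $\ccal^2$ domain $U$ is not Hartogs $(n-q-1)$-\psc\ (here $q\le n-2$; for $q=n-1$ both properties hold trivially). Theorem~\ref{lemNotpsc}, applied with Hartogs index $n-q-1$, gives a point $p\in\partial U$, a \nbh\ $V$ and a strictly $(n-q-2)$-\psc\ set $G=\{\vphi<0\}$ with $V\setminus U\subset G\cup\{p\}$ and $\partial G\cap\partial U\cap V=\{p\}$. Since $\partial G\setminus\{p\}\subset U$ near $p$, the two $\ccal^2$ hypersurfaces $\partial U=\{\vrho=0\}$ and $\partial G$ are tangent at $p$; writing both as graphs over the common tangent hyperplane and comparing real Hessians at the touching point yields, for every $X$ in the common holomorphic tangent space $H$,
\[
\mathcal{L}_\vrho(p)(X,X)\;\le\;-\,\mathcal{L}_\vphi(p)(X,X).
\]
Strict $(n-q-2)$-\pscy\ of $G$ means that $\mathcal{L}_\vphi(p)$ has at most $n-q-2$ non-positive eigenvalues on the $(n-1)$-dimensional space $H$, hence is positive definite on some $(q+1)$-dimensional subspace $E\subset H$; by the displayed inequality, $\mathcal{L}_\vrho(p)$ is then negative definite on $E$, so $U$ is not Levi \qpsc\ at $p$. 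This gives ``Levi $\Rightarrow$ Hartogs'' with no exhaustion at all, and together with your first direction it completes the proof.
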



\section{Duality Principle of $q$-Pseudoconvex Sets}\label{chap-duality}

In this section, we study the link between strictly \qpsc\ sets and their complements. Their relation leads to two duality theorems. The first one is due to Basener (see \cite[Proposition 6]{Ba}).

\begin{thm}\label{lemCofspsc}
If an open set $\Omega$ in $\cbb^n$ is strictly \qpsc\ at some point $p\in{\partial\Omega}$, then for every small enough \nbh\ $V$ of $p$, for each $w$ in $\partial\Omega\cap{V}$ and every \nbh\ $W\relc{V}$ of $w$ there is a family $\{A_t\}_{t \in [0,1]}$ of $(n{-q-}1)$-dimensional complex submanifolds of $W$ which is continuously parameterized by $t$ and fulfills
\benum
\item $\cl{A}_t\subset(\cbb^n\setminus\cl{\Omega})$ for every $t \in [0,1)$,
\item $w\in{A_1}$, but $\cl{A}_1\setminus\{w\}\subset(\cbb^n\setminus\cl{\Omega})$.
\eenum
In view of Theorem~\ref{equivqpsc} above, this means that $U:=V \cap (\cbb^n\setminus\overline{\Omega})$ is not Hartogs $(q+1)$-\psc\ relatively to $V$.
\end{thm}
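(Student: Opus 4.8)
The plan is to combine the openness of strict Levi \qpscy\ with the classical construction of a supporting analytic manifold at a strictly pseudoconvex boundary point, performed on the positive part of the Levi form. First I would note that the hypothesis is an \emph{open} condition along $\partial\Omega$: by Definition~\ref{defn-levi-qpsc} the Levi form $\mathcal{L}_\vrho(p)$ restricted to $H_p\partial\Omega$ has at most $q$ non-positive eigenvalues, hence at least $(n-1)-q$ strictly positive ones, and the positivity of these persists for base points near $p$. Thus, shrinking $V$, the set $\Omega$ is strictly Levi \qpsc\ at every $w\in\partial\Omega\cap V$, with a uniform lower bound on the number and the positivity of the good eigenvalues. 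It therefore suffices to fix such a $w$ and to build the family inside a small ball $W'$ centered at $w$ with $W'\subset W$.

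Centered at $w=0$, I would bring the $\ccal^2$ defining \fct\ $\vrho$ into normal form. A biholomorphic change of coordinates (the Levi-polynomial trick) absorbing the first order and the holomorphic second order terms of the Taylor expansion of $\vrho$ at $0$ lets me assume that $H_0\partial\Omega=\{z_1=0\}$ and
\[
\vrho(z)=\repa z_1+\mathcal{L}_\vrho(0)(z,z)+o(|z|^2).
\]
A unitary change in the tangential variables diagonalizes the Levi form on $\{z_1=0\}$; selecting an $(n-1-q)$-dimensional subspace $E$ on which it is positive definite, with modulus of positivity $\delta>0$, and restricting to the $(n-q)$-dimensional complex affine subspace $\Pi$ spanned by the normal direction $z_1$ and $E$, the slice $\Omega\cap\Pi$ is strictly pseudoconvex at $0$ inside $\Pi\cong\cbb^{n-q}$.

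The family is then the standard supporting-hyperplane family inside $\Pi$. Fixing a small $s_0>0$ I would set
\[
A_t:=\{z\in\Pi\cap W': z_1=(1-t)s_0\},\qquad t\in[0,1],
\]
a continuously varying family of parallel complex hyperplanes of $\Pi$, each of complex dimension $n-q-1$. Writing $z'=(z_2,\dots,z_{n-q})$ and $c=(1-t)s_0$, the normal form and the bound $2c\,\repa(\sum_k\vrho_{1\bar k}\bar z_k)\ge-\tfrac{\delta}{2}|z'|^2-O(c^2)$ give, on the slice $z_1=c$,
\[
\vrho\ \ge\ c\bigl(1-O(c)\bigr)+\Bigl(\tfrac{\delta}{2}-\eps\Bigr)|z'|^2,
\]
where $\eps$ is as small as we wish once $W'$ is small enough. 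For $t\in[0,1)$ we have $c>0$ and the right-hand side is strictly positive on the compact closure $\cl{A}_t$, so $\cl{A}_t\subset\cbb^n\setminus\cl\Omega$; for $t=1$ we have $c=0$, so $\vrho>0$ for $z'\neq0$ while $\vrho(0)=0$, giving $w\in A_1$ and $\cl{A}_1\setminus\{w\}\subset\cbb^n\setminus\cl\Omega$. This is precisely the asserted family, and since its members have dimension $n-q-1=n-(q+1)$, its existence contradicts the Kontinuit\"atssatz characterization of Hartogs $(q+1)$-\pscy\ (property~\ref{eqivqpsc7} of Theorem~\ref{equivqpsc}, applied relatively to $V$); hence $U=V\cap(\cbb^n\setminus\cl\Omega)$ is not Hartogs $(q+1)$-\psc\ relatively to $V$.

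The step I expect to be the main obstacle is the bookkeeping in the normalization together with the uniformity of the quadratic estimate: one must push the holomorphic-quadratic absorption and the diagonalization through while keeping the positivity modulus $\delta$ and the error $\eps$ uniform over $w\in\partial\Omega\cap V$, so that a single neighborhood $V$ works for all boundary points and all admissible $W$. Once the normal form is in place, the geometric heart of the argument---that positivity of the Levi form on an $(n-1-q)$-dimensional subspace produces a supporting $(n-q-1)$-dimensional manifold meeting $\partial\Omega$ only at $w$---is routine.
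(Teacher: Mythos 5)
Your proposal is correct and takes essentially the same approach as the paper: the paper invokes Basener's Proposition~6 to obtain precisely the normal form you derive by hand (after a holomorphic change of coordinates at $w=0$, the set $\cl{\Omega}$ meets the slice $\cbb^{n-q}\times\{0\}^q$ only in $\{\repa z_1<0\}\cup\{w\}$), and then slides the very same family of parallel flat $(n-q-1)$-dimensional discs $A_t=\{(1-t)\eps\}\times B^{n-q-1}_r(0)\times\{0\}^q$. The uniformity over $w\in\partial\Omega\cap V$ that you flag as the main obstacle is in fact not needed: once openness of strict Levi $q$-pseudoconvexity guarantees that every $w\in\partial\Omega\cap V$ is a strictly $q$-pseudoconvex boundary point, the constants $\delta$, $\eps$, $s_0$ in your quadratic estimate are allowed to depend on $w$ and on the prescribed \nbh\ $W$.
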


\begin{proof} Fix the point $p\in\partial \Omega$. By Proposition 6 in \cite{Ba}
there exists a \nbh\ $V$ of $p$ in $\cbb^n$ such that for every point $w$ in $\partial\Omega\cap{V}$ the following
properties hold after an appropriate holomorphic change of coordinates on $V$,
\begin{equation}\label{eqlemCofspsc01}
w=0 \qand \mathrm{Re}(z_1) < 0 \ \fe z \in V \cap (\cl{\Omega}\setminus \{w\})\cap(\cbb^{n{-}q}{\times}\{0\}^q).
\end{equation}
Let $U\relc{V}$ be any \nbh\ of $w$. Then there are real numbers $\eps>0$ and $r>0$ such that  for each $t \in [0,1]$
the submanifold \[A_t=\{(1-t)\eps\}{\times}B^{n-q-1}_r(0){\times}\{0\}^q\] is contained in $U$. Finally, the properties~\eqref{eqlemCofspsc01} imply that the family $\{A_t\}_{t\in[0,1]}$ has the desired properties.
\end{proof}

In order to establish a converse statement of Theorem \ref{lemCofspsc}, we need the following lemma.

\blem \label{lem-sb} Let $q \in \{0,\ldots,n-1\}$ and let $\psi$ be a $\ccal^2$-smooth strictly \qpsh\ \fct\ on an open set $V$ in $\cbb^n$. Assume that $V$ contains two compact sets $K$ and $L$ which fulfill the following properties:
\begin{itemize}
\item[(1)] $K,L \subset \{z \in V : \psi(z) \leq 0\}$
\item[(2)] $L \cap \{z \in V : \psi(z) = 0\}=\emptyset$
\item[(3)] $K \cap \{z \in V : \psi(z) = 0\} \neq \emptyset$
\end{itemize}
Under these conditions, there exist a point $z_0 \in \partial K$, a \nbh\ $U \relc V$ of $z_0$ and a $\ccal^2$-smooth \sqpsh\ \fct\ $\vphi$ on $U$ satisfying:
\begin{itemize}

\item[(a)] $K,L \subset \{z \in U : \vphi(z) \leq 0\}$
\item[(b)] $L \cap \{z \in U : \vphi(z) = 0\}=\emptyset$
\item[(c)] $K \cap \{z \in U : \vphi(z) = 0\} =\{z_0\}$
\item[(d)] $\nabla\vphi \neq 0$ on $\{z \in U : \vphi(z)=0\}$

\end{itemize}
In other words, the set $G:=\{z \in U : \vphi(z)<0\}$ is strictly \qpsc\ at points of the set $\{z \in U: \vphi(z)=0\}$. Moreover, it contains $L$, and $K$ touches $\partial G$ from the inside of $G$ only at the point~$z_0$.
\elem

\bproof We proceed similarly to the proof of Proposition 3.2 in \cite{HST}. Let $\delta>0$ and $V_{\delta}:=B_{1/\delta}(0)\cap\{z \in V : d(z,\partial V) > \delta\}$. We choose $\delta>0$ so small that the conditions (1) to (3) of this lemma still hold if we replace $V$ by $V_\delta$.

We set $U:=V_\delta$. Let $B:=B_\delta(0)$ and consider the \fct\ $f:B\to\psi(U)$ defined by $f(w):=\max_{z \in K} \psi(z+w)$. Pick a point $p \in K \cap \{z \in U: \psi(z)=0\}$. Since $\psi$ is \sqpsh, it follows from the local maximum property (see Theorem \ref{prop-qpsh-locmax}) that $\{\psi>0\} \cap W$ is not empty for any \nbh\ $W$ of $p$. Hence, since $p$ belongs to $K$ and since $f(0)=\psi(p)=0$, the image $f(B)$ contains a non-empty open interval $I=(0,\delta')$ for some $\delta'>0$. Since $f(B)$ lies in $\psi(U)$, Sard's theorem implies that there exists a regular value $f(w_0)$ inside $I$ which is so close to $\psi(p)=0$ that the conditions (1) to (3) are still valid for the \fct\ $\psi_0(z):=\psi(z+w_{0})-f(w_0)$ instead of $\psi$. Notice that $0$ is now a regular value for $\psi_0$.

Let $z_0$ be a point in $K$ with $f(w_0)=\psi(z_0+w_0)$, so that $\psi_0(z_0)=0$. For $\eps>0$, we define $\vphi(z):=\psi_0(z)-\eps|z-z_0|^2$. Then it is easy to see that the set $K \cap \{z \in U : \vphi(z)=0\}$ contains only the point $z_0$, so we also obtain property (c). Now if $\eps>0$ is small enough, then the function $\varphi$ is still strictly $q$-\psh. Besides of that, the \fct\ $\vphi$ fulfills also the properties (a) and (b). Finally, by the choice of $f(w_0)$, zero is a regular value for $\vphi$, so we also gain the property (d).
\eproof

The next result is the second duality theorem and a converse statement to Theorem~\ref{lemCofspsc}.

\begin{thm}\label{lemNotpsc}
Let $\Omega$ be a domain in $\cbb^n$ which is not Hartogs $q$-\psc. Then there exist a point $p\in\partial \Omega$, a \nbh\ $V$ of $p$ and a strictly $(q-1)$-\psc\ set $G$ in $V$ such that the set $V\setminus{\Omega}$ is contained in $G\cup\{p\}$ and $\{p\}=\partial G\cap\partial \Omega\cap{V}$, i.e., $V \setminus \Omega$ touches $\partial G$ from the inside of $G$ only at $p$.
\end{thm}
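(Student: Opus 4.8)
The plan is to turn the failure of Hartogs $q$-pseudoconvexity into an explicit analytic-disc configuration near a boundary point, and then feed a carefully tailored strictly $(q-1)$-\psh{} function into Lemma~\ref{lem-sb} (applied with its parameter equal to $q-1$), whose output---a single interior point of contact together with a regular strictly \qpsc{} hypersurface---is exactly the assertion of the theorem.

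First I would unwind the hypothesis. Since $\Omega$ is not Hartogs $q$-\psc, Definition~\ref{HaFi} yields a general $(n-q,q)$ Hartogs figure $(H,P)$ with $H\subset\Omega$ but $P\not\subset\Omega$. Composing with the biholomorphism $F$ of the figure, I work in coordinates $(z',w)\in\cbb^{n-q}\times\cbb^q$ on $P=\Delta^n_1$, where the natural objects are the $(n-q)$-dimensional polydiscs $D_w=\Delta^{n-q}_1\times\{w\}$. The shape of $H$ forces all $D_w$ with $|w|<r$ into $\Omega$ and all shells $A^{n-q}_{R,1}\times\{w\}$ into $\Omega$, so the complement $P\setminus\Omega$ is confined to the central column $\{|z'|\le R,\ |w|\ge r\}$, while $\partial D_w\subset\Omega$ for every $w$. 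A sweeping argument then locates the contact: set $r_0:=\inf\{\,|w|:D_w\not\subset\Omega\,\}\ge r$; every $D_w$ with $|w|<r_0$ lies entirely in $\Omega$, and by closedness of the complement there is $w_0$ with $|w_0|=r_0$ whose disc first meets $\partial\Omega$ at an \emph{interior} point $p=(z_0',w_0)$ (interior because all boundaries $\partial D_w$ remain in $\Omega$). Thus near $p$ the domain contains the whole inner ball of good discs $\{|w|<r_0\}$ together with the punctured touching disc $D_{w_0}\setminus\{p\}$, while the complement is squeezed into $\{|z'|\le R,\ |w|\ge r_0\}$.

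The heart of the matter is to manufacture, on a small \nbh{} $V$ of $p$, a $\ccal^2$ strictly $(q-1)$-\psh{} \fct{} $\psi$ whose positive set is trapped inside $\Omega$ and is reached by the complement exactly at $p$, i.e.\ $\{\psi>0\}\cap\cl V\subset\Omega$ and $\psi(p)=0$. The $n-q$ strictly \psh{} directions are read off from the touching disc, along which $\Omega$ extends, so that $\psi$ may grow like $\|z'-z_0'\|^2$ there; the confinement of the cap is enforced by a term penalising the $w$-directions occupied by the complement, exploiting both the inner-ball barrier $\{|w|<r_0\}$ and the shell $\{|z'|>R\}\subset\Omega$ (a convenient model to start from being $\psi=\|z'-z_0'\|^2-|w-w_0|^2+c$, arranged so that the complement lies in $\{\psi\le0\}$). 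Granting such a $\psi$, I set $K:=\cl V\setminus\Omega$ and choose any auxiliary compact $L\subset\{\psi<0\}\cap V$; hypotheses (1)--(3) of Lemma~\ref{lem-sb} then hold, with $p\in K\cap\{\psi=0\}$ giving (3). The lemma produces a point $z_0\in\partial K$, a \nbh{} $U\relc V$, and a strictly $(q-1)$-\psh{} $\vphi$ for which $G:=\{\vphi<0\}$ is strictly $(q-1)$-\psc{} along $\partial G$, with $L\subset G$ and $K\subset G\cup\{z_0\}$. Since $z_0\in\partial K\cap U$ forces $z_0\in\partial\Omega$, relabelling $p:=z_0$ and shrinking $V$ to $U$ gives $V\setminus\Omega\subset K\subset G\cup\{p\}$ and $\partial G\cap\partial\Omega\cap V=\{p\}$, which is the claim.

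The main obstacle is precisely the construction of $\psi$, where the eigenvalue bookkeeping and the geometric containment pull against each other: strict $(q-1)$-\psh{}ity asks for $n-q+1$ positive Levi eigenvalues, yet the touching disc supplies only $n-q$ of them, and any purely $w$-directional attempt to gain the missing positivity tends to push $\{\psi>0\}$ into the complement, which fills the entire range $\{|w|\ge r_0\}$. Resolving this requires the finer geometry at $p$---arranging (e.g.\ by rescaling the Hartogs figure so that $R<r_0$) that the complex normal to $\{\psi=0\}$ absorbs a negative eigenvalue, so that the relevant Levi form is strictly $(q-1)$-\psc{} on the holomorphic tangent space $H_p\partial G$ even though the ambient model is only borderline---combined with the fact that $\partial\Omega$ is merely \cont, so $\psi$ must be assembled from the disc family itself rather than from any smooth defining \fct{} of $\Omega$. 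Pinning down this function, and checking it meets the exact hypotheses of Lemma~\ref{lem-sb}, is where the real work of the proof lies.
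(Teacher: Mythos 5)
Your outline coincides with the paper's (extract a Hartogs figure, sweep it in the $w$-directions to a first contact with the complement, then feed the configuration into Lemma~\ref{lem-sb} with parameter $q-1$), but the proposal has a genuine gap exactly at the step you yourself flag as ``the heart of the matter'': the strictly $(q-1)$-\psh\ \fct\ $\psi$ is never produced, and the model you start from cannot be repaired along the lines you indicate. The \fct\ $\|z'-z_0'\|^2-|w-w_0|^2+c$ has Levi form $\mathrm{diag}(I_{n-q},-I_q)$, hence $q$ negative eigenvalues, so it is not even $(q-1)$-\psh\ --- one negative direction too many --- and neither rescaling the Hartogs figure so that $R<r_0$ nor ``letting the complex normal absorb a negative eigenvalue'' changes this signature count. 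Since this construction is the entire content of the theorem, what you have actually proved is only the (correct) reduction to Lemma~\ref{lem-sb}. There are also two mismatches in how you invoke that lemma: your $K:=\cl{V}\setminus\Omega$ is not a compact subset of $V$ (the lemma needs $K,L$ compact in $V$; its proof shrinks $V$ to a $V_\delta$ containing them), and $L$ cannot be an arbitrary compact in $\{\psi<0\}$ --- it must be the \emph{outer} portion of the local complement, since that is what keeps the contact point produced by the lemma in the interior and yields the inclusion $V\setminus\Omega\subset G\cup\{p\}$ for the whole complement rather than just for $K$.

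The paper's resolution of your eigenvalue dilemma is the ingredient to remember: it sweeps not with raw polydiscs penalized by a Euclidean square, but with the level sets of $u_k(w)=-\frac{1}{k}\log|(w_1^k,\ldots,w_q^k)|+\frac{1}{k}|w|^2$, the smooth approximations of $-\log|w|_\infty$. Because $-\log$ of a complex norm is \sph\ (Example~\ref{ex:subph}), each $u_k$ has at most $q-1$ negative Levi eigenvalues in the $w$-variables, and the added square makes the complex-radial direction strictly positive: this is precisely the one missing positive eigenvalue, and it is gained without the positive set leaving $\Omega$, because the level sets of $u_k$ are still approximately polydisc shells fitting the Hartogs figure --- which disposes of your objection that gaining positivity in $w$ pushes $\{\psi>0\}$ into the complement. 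Setting $M:=F^{-1}(\cbb^n\setminus\Omega)\cap\cl{\Delta}$ and $c_k:=\inf\{a : M\subset\{u_k<a\}\}$ (your $r_0$ becomes this touching level), one checks that for $k$ large the contact occurs on the part $K_0$ of $M$ lying over $\cl{\Delta^q_\alpha}$ and misses the part $L_0$ over the outer annulus $A^q_{\beta,1}$; these are the $K$ and $L$ that feed Lemma~\ref{lem-sb}. Your remaining observation --- that the $n-q$ disc directions contribute only zero eigenvalues --- is a fair criticism which in fact also applies to the pullback $u_k\circ F^{-1}$ as written in the paper; but it is cured by adding a small multiple of $|z'|^2$ and rerunning the touching-level argument (harmless, since $M$ is compact and all the relevant inequalities are strict), not by the normal-direction mechanism you describe.
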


\begin{proof} Since $\Omega$ is not Hartogs \qpsc, then, in view of Definition \ref{HaFi}, there exists a $(n-q,q)$-Hartogs figure $(H,P)$ and a biholomorphic mapping $F$ on $\Delta:=\Delta^n_1(0)$ onto its image in $\cbb^n$ such that $H=F(H_e)$ lies in $\Omega$, but $P=F(\Delta^n_1(0))$ is not contained entirely in $\Omega$ for the Euclidean Hartogs figure
\[
H_e=\big(\Delta^{n-q}_1\times\Delta^{q}_r\big) \cup \big(A^{n-q}_{R,1}\times\Delta^{q}_1\big) \ \subset \ \cbb^{n-q}_{z}\times\cbb^{q}_{w}.
\]
After shrinking $\Delta$ if necessary, we can assume that $F$ is defined on a \nbh\ of the closure of $\Delta$. We set $M:=F^{-1}(\cbb^n\setminus\Omega) \cap \cl{\Delta}$. Now let $\alpha,\beta \in (0,1)$ and set
\[
K_0:=\big(\cl{\Delta^{n-q}_1 \times \Delta^{q}_{\alpha}}\big) \cap M \qand L_0:=\big(\cl{\Delta^{n-q}_1 \times A^{q}_{\beta,1}}\big) \cap M.
\]
Since $\Phi(H_e)$ lies in $\Omega$, we can find an appropriate $\alpha \in (0,1)$ such that $K_0$ is not empty. Fix some $\beta \in (0,1)$ with $\alpha < \beta$. Let $|w|_\infty=\max_{j=1,\ldots,q}|w_j|$ and consider the \fct\ $u(w):=-\log|w|_\infty$. By the assumptions made on $H$ and $P$, we can find a large enough number $c \in \rbb$ such that
\bea\label{eq-dual-001}
& M \subset \ D_c(u):=\{(z,w) \in \cl{\Delta} : u(w)<c\}. &
\eea
 Let $k \in \nbb$ and define the \fct\ $u_k$ by
\[
u_k(w):=-\frac{1}{k}\log|(w_1^k,\ldots,w_{q}^k)| + \frac{1}{k}|w|^2,
\]
where $|w|$ denotes the euclidean norm of the vector $w = (w_1,w_2, \ldots,w_q)$.
Then the \fct\ $u_k$ is $\ccal^\infty$-smooth and strictly $(q-1)$-\psh\ on $\cbb_w^{q}\setminus\{0\}$ (see \cite[Example 2.8 ]{TPESZ2} or Example~\ref{ex:subph} above). Moreover, the sequence $(u_k)_{k \in \nbb}$ converges  to $u$ uniformly on compact sets in $\cbb_w^{q}\setminus\{0\}$. Therefore, and in view of property~\eqref{eq-dual-001}, we can pick an integer $k_0$ so large that $M$ lies in $D_c(u_k):=\{(z,w) \in \cl{\Delta} : u_k(w)<c\}$ for every $k \geq k_0$. Define
\[
c_k:=\inf\big\{ a \in \rbb : M \subset D_a(u_k)\big\}.
\]
Now, since $\alpha < \beta$, we can fix an even larger $k \geq k_0$ so that $L_0 \subset D_{c_k}(u_k)$ and $L_0 \cap \{(z,w) \in \cl{\Delta}: u_k(w)=c_k\}=\emptyset$. Then it is easy to see that $K_0$ intersects $\{(z,w) \in \cl{\Delta} : u_k(w)=c_k\}$ in a point $\zeta_0 \in \Delta$. Finally, we set $U:=F^{-1}(\Delta)$, $K:=F^{-1}(K_0)$, $L:=F^{-1}(L_0)$ and $\psi:=u_k\circ F^{-1}$ and verify that all the conditions (1) to (3) in Lemma~\ref{lem-sb} are satisfied. Thus, it follows from this lemma that there are a point $p$ in $\partial \Omega$, a \nbh\ $V$ of $p$ and a strictly $(q-1)$-\psh\ \fct\ on $V$ such that the set $G:=\{z \in V : \vphi(z)<0\}$ is the desired strictly Levi $(q-1)$-\psc\ set in $V$, whose boundary $\partial G$ shares only a single point with $\partial \Omega$ in $V$.
\end{proof}


\section{On $q$-Pseudoconcave Graphs}\label{chap-main}

\label{sect-qpscve-graphs}

In this section, we will analyze whether submanifolds or graphs of \cont\ functions admit local complex foliations under the condition that their complements are \qpsc. The goal is to generalize the 
classical Hartogs theorem and the results in \cite{Sh93} and \cite{Ch} mentioned in the introduction.

In order to simplify our notations, we introduce a generalized version of concavity based on \qpscy\ in the sense of Rothstein, which we used to call \textit{Hartogs \qpscy} in Chapter \ref{chap-qpsc} (recall Definitions \ref{HaFi} and \ref{defn:relqpsc}).

\bdefn Let $q \in \{1,\ldots,N-1\}$ and let $S$ be a closed subset of an open set $\Omega$ in $\cbb^N$. We say that $S$ is a \textit{(Hartogs) \qpscv{} subset of $\Omega$} if $\Omega':=\Omega\setminus S$ is Hartogs \qpsc{} relatively to $\Omega$. \edefn

As the subsequent statement demonstrates, a surface filled by complex $q$-dimensional analytic sets is strongly related to Hartogs $q$-pseudoconvexity with the same index $q$.

\bprop \label{conv-main} Let $q \in \{1,\ldots,N-1\}$ and let $S$ be a closed subset of an open set $\Omega$ in $\cbb^N$. Assume that the boundary $\partial_\Omega S$ of $S$ in $\Omega$ is locally filled by $q$-dimensional analytic sets, i.e., for every point $p$ in $\partial_\Omega S$ there is a \nbh\ $W$ of $p$ in $\Omega$ such that for each point $w$ in $\partial_\Omega S \cap W$ there exists a $q$-dimensional analytic subset $A_w$ of $W$ with $w \in A_w \subset S$ and $\dim_z A_w \geq q$ for each $z \in A_w$. Then $S$ is a \qpscv\ subset of $\Omega$.
\eprop

\bproof Assume that the statement is false, so the complement of $S$ is not Hartogs \qpsc{} relatively to $\Omega$. Then, according to Theorem \ref{lemNotpsc}, there exist a boundary point $p$ of $S$ in $\Omega$, a \nbh\ $V$ of $p$ and a strictly $(q-1)$-\psc\ set $G$ in $V$ such that $S\cap V$ touches $\partial G$ from the inside of $G$ exactly in $p$. In view of Remark~\ref{rem-def-fct}, we can construct a strictly $(q-1)$-\psh\ \fct\ $\psi$ on some \nbh\ $U$ of $p$ in $V$ which defines $G$ near $p$. By the assumption made on $\partial_\Omega S$, there are a \nbh\ $W$ of $p$ and a $q$-dimensional analytic subset $A$ of $W$ with $p \in A \subset S$. But then $\psi(p)=0$ and $\psi<0$ on $A \cap U$ outside $p$, which contradicts the local maximum principle (see Theorem~\ref{prop-loc-max-analyt}). Therefore, $S$ has to be a $q$-\pscv{} subset of $\Omega$.
\eproof

We present a converse statement on the complex foliation of \qpscv\ CR-submanifolds. For this we need to extend Definition \ref{defn-levi-qpsc} as follows.

\bdefn \label{defn-levi-null} Let $\Gamma = \{\vphi_1=\ldots=\vphi_r=0\}$ be a $\ccal^2$-smooth submanifold in $\cbb^N$ such that $\nabla \vphi_j \neq 0$ on $\Gamma$ for each $j=1,\ldots,r$.
\benum
\item The holomorphic tangent space $H_p \Gamma$ to $\Gamma$ at some point $p$ in $\Gamma$ is given by
\[
H_p \Gamma := \bigcap_{j=1}^r \Big\{ X \in \cbb^N : (\partial \vphi_j(p),X)=\sum_{l=1}^N \frac{\partial\vphi_j}{\partial z_l}(p) X_l =0\Big\}.
\]
\item If the complex dimension of $H_p\Gamma$ has the same value $d$ at each point $p$ in $\Gamma$, then we say that $\Gamma$ is a \textit{CR-submanifold}.

\item The \textit{Levi null space} \index{Levi!null space} of $\Gamma$ at $p$ is the set
\[
\ncal_p:=\bigcap_{j=1}^r\Big\{X \in H_p\Gamma : \mathcal{L}_{\vphi_j}(p)(X,Y)=0 \fe Y \in H_p \Gamma\Big\}.
\]
\eenum
\edefn

\bprop\label{prop-k2} Let $\Gamma = \{\vphi_1=\ldots=\vphi_r=0\}$  be a real $\ccal^2$-smooth CR-submanifold of some open set $\Omega$ in $\cbb^N$ of codimension $r \in \{1,\ldots,2N-1\}$ and fix a number $q \in \{1,\ldots, N-1\}$. Assume further that the complex dimension of the holomorphic tangent space to $\Gamma$ at every point in $\Gamma$ equals $q$ and that $\Gamma$ is a \qpscv{} subset of $\Omega$. Then $\Gamma$ is locally foliated by complex $q$-dimensional submanifolds.
\eprop

\bproof Since the Levi null space lies inside the holomorphic tangent space to~$\Gamma$, it is clear that its complex dimension does not exceed $q$. We claim that the complex dimension of $\ncal_p$ is equal to $q$ for each point $p \in \Gamma$, so that $\ncal_p$ coincides with $H_p \Gamma$.

In order to verify the claim we argue by contradiction and assume that there is a point $p$ in $\Gamma$ such that $\ncal_p$ is a proper subspace of $H_p \Gamma$. This implies that there is an index $j_0$ in $\{1,\ldots,r\}$ and a vector $X_0$ in $H_p \Gamma$ such that $\mathcal{L}_{\vphi_{j_0}}(p,X_0,X_0) \neq 0$. Indeed, a priori, if $\ncal_p \subsetneq H_p\Gamma$, there exist two vectors $X'$ and $Y'$ in $H_p \Gamma$ such that
\[
\mathcal{L}_{\vphi_{j_0}}(p)(X',Y') \neq 0.
\]
If $\mathcal{L}_{\vphi_{j_0}}(p,X',X') \neq 0$ or $\mathcal{L}_{\vphi_{j_0}}(p,Y',Y') \neq 0$, we are done and proceed by picking $X_0=X'$ or, respectively, $X_0=Y'$. Otherwise, if $\mathcal{L}_{\vphi_{j_0}}(p,X',X')$ and $\mathcal{L}_{\vphi_{j_0}}(p,Y',Y')$ both vanish, we can choose an appropriate complex number $\nu$ which satisfies
\[
\mathcal{L}_{\vphi_{j_0}}(p,X'+\nu Y',X'+\nu Y') = 2\repa\big(\nu\mathcal{L}_{\vphi_{j_0}}(p)(X',Y')\big) \neq 0.
\]
Then we continue with $X_0:=X'+\nu Y'$. Now without loss of generality we can assume that $j_0=1$ and $\mathcal{L}_{\vphi_1}(p,X_0,X_0) > 0$. For a positive constant $\mu$ we define another \fct\
\[
\vphi:=\vphi_1 + \mu\sum_{j=1}^r \vphi_j^2.
\]
Since, by the assumptions on $\Gamma$, the gradients $\nabla\vphi_{1},\ldots,\nabla\vphi_{r}$ do not vanish at $p$, there is a \nbh\ $U$ of $p$ such that the set $S:=\{z \in U : \vphi(z)=0\}$ is a real hypersurface containing $\Gamma \cap U$, so that $H_p \Gamma$ becomes a subspace of $H_p S$. Moreover, for $X \in H_p\Gamma$ we can easily compute the Levi form of $\vphi$ at $p$,
\bea
&\mathcal{L}_{\vphi}(p,X,X) = \mathcal{L}_{\vphi_1}(p,X,X) + 2\mu \underbrace{\sum_{j=1}^r |(\partial \vphi_j(p),X)|^2}_{=:R(p,X)}.& \label{eq-R}
\eea
We assert that $H_p S$ contains an $(N-q)$-dimensional subspace $E$ on which $\mathcal{L}_\vphi(p,\cdot)$ is positive definite. To see this, consider the complex normal space $N_p\Gamma$ to $H_p \Gamma$ in $H_p S$,
\[
N_p\Gamma:= \{ Y \in H_pS : \sum_{l=1}^N Y_lX_l=0 \ \fe X \in H_p\Gamma \}.
\]
Observe that $N_p\Gamma$ has dimension $d:=N-q-1$ and choose a basis $Y_1,\ldots,Y_{d}$ of $N_p\Gamma$. Let $E$ be the complex span of the vectors $X_0$ from above and $Y_1,\ldots,Y_{d}$. Since $X_0$ belongs to $H_p \Gamma$, but $Y_1,\ldots,Y_{d}$ do not, the dimension of $E$ equals $N-q$.

We set $E_0:=\{ Z \in E : |Z|=1\}$ and $M:=\{Z \in E_0 : \mathcal{L}_{\vphi_1}(p,Z,Z) \leq 0\}$.

If $M$ is empty, then $\mathcal{L}_{\vphi_1}(p,\cdot,\cdot)$ is positive on $E$ and we can put $\mu=0$.

If $M$ is not empty, notice first that, if $Z$ lies in $M$, then $R(p,Z)>0$ (recall the equation \eqref{eq-R} for the definition of $R(p,Z)$). Otherwise $Z$ belongs to $H_p \Gamma$ and, therefore, it is a multiple of $X_0$, i.e., $Z=\lambda X_0$ for some complex number~$\lambda$. But then $\mathcal{L}_{\vphi_1}(p,Z,Z)=|\lambda|^2\mathcal{L}_{\vphi_1}(p,X_0,X_0)>0$ and $Z$ lies in $M$ at the same time, which is absurd. Hence, $R(p,Z)>0$ for every vector $Z$ in $M$.  Since $E_0$ is compact and $\Gamma$ is $\ccal^2$-smooth, we can find constants $c_0>0$ and $c_1>0$ such that $R(p,Z)\geq c_0$ for every $Z$ in $M$ and $\mathcal{L}_{\vphi_1}(p,Z,Z) \geq -c_1$  for every $Z$ in $E_0$. Now we can choose $\mu$ so large that $-c_1 + \mu c_0 >0$ in order to obtain that $\mathcal{L}_{\vphi}(p,Z,Z)>0$ for each $Z$ in $E_0$. Since $\mathcal{L}_{\vphi}(p,\lambda X,\lambda X)=|\lambda|^2 \mathcal{L}_{\vphi}(p, X,X)$ for every $X$ in $E$ and $\lambda$ in $\cbb$, we have that $\mathcal{L}_{\vphi}(p, \cdot,\cdot )$ is positive on $E\setminus\{0\}$.

Therefore, in both cases, the Levi form $\mathcal{L}_{\vphi}$ at $p$ is positive definite on the $(N-q)$-dimensional space $E$ in $H_pS$. Hence, $\{\vphi<0\}$ is strictly $(q-1)$-\psc\ at $p$. Since $\Gamma \cap B \subset \partial\{\vphi<0\}$ for some open ball $B$ centered at $p$, it follows from the continuity princple in Theorem~\ref{lemCofspsc} and Theorem~\ref{equivqpsc} that the complement of $\Gamma$ cannot be Hartogs \qpsc\ relatively to $B$. Therefore, it cannot be a $q$-\pscv\ subset of $\Omega$, which is a contradiction to the assumption made on $\Gamma$. Finally, we can conclude that $\ncal_p=H_p \Gamma$. By assumption, these two spaces have constant dimension $q$ on $\Gamma$, so Theorem~1.1 in \cite{Free} implies that $\Gamma$ admits a local foliation by complex $q$-dimensional submanifolds.
\eproof

Locally, a generic smooth submanifold of codimension $k$ in $\cbb^{N}$ can be represented by the graph of a smooth mapping $f:U \to \rbb^k$, where $U$ is an open set in $\cbb^n\times\rbb^k$ with $n=N-k$ (see e.g.~\cite[Chapter §1.3]{BER}). This representation is not unique in general, but it motivates to investigate conditions under which the graph of a continuous mapping $f:U \to \rbb^k\times \cbb^p$ carries a foliation by complex submanifolds.

\begin{setting} \label{set-qpscv} Fix integers $n\geq 1$ and $k,p \geq 0$ such that $N=n+k+p \geq 2$. Then $\cbb^N$ splits into the product
\[
\cbb^N=\cbb^N_{z,w,\zeta}=\cbb^n_z\times \cbb^k_w \times \cbb^p_\zeta=\cbb^n_z\times (\rbb_u^k+i\rbb_v^k) \times \cbb^p_\zeta,
\]
where $w=u+iv$. Let $G$ be an open set in $\cbb^n_z\times\rbb^k_u$ and let $f=(f_v,f_\zeta)$ be \cont\ on $G$ with image in $\rbb_v^k\times\cbb^p_\zeta$. Then the graph of $f$ is given by
\[
\Gamma(f)=\{(z,w,\zeta) \in \cbb^n_z\times \cbb^k_w \times \cbb^p_\zeta: \ (z,u) \in G, \ (v,\zeta)=f(z,u)\}.
\]
Moreover, we denote by $\pi_{z,u}$ the natural projection
\[
\pi_{z,u} : \cbb_{z}^n\times \cbb_{w}^{k} \to \cbb_{z}^n\times\rbb_{u}^k, \quad \pi_{z,u}(z,w) \mapsto (z,u).
\]
\end{setting}

We are interested in the question whether $\Gamma(f)$ admits a local foliation by complex submanifolds. In this context, we first have to study the \qpscvy\ of the graph of $f$.

\begin{lem} \label{lemGraphs} Pick another integers $m \in\{ 1,\ldots,n\}$ and $r \in \{ 0,\ldots,p\}$ with $k+ r \geq 1$. For $\mu_1,\ldots,\mu_r \in \{ 1,\ldots,p\}$ with $\mu_1<\ldots<\mu_r$, we divide the coordinates of $\zeta$ into $\zeta'=(\zeta_{\mu_1},\ldots,\zeta_{\mu_r})$ and the remaining coordinates $\zeta''=(\zeta_{\mu_j}:j \in \{1,\ldots p\}\setminus\{\mu_1,\ldots,\mu_r\})$ which we assume to be ordered by their index $\mu_j$, as well. Finally, let $\Pi$ be a complex $m$-dimensional affine subspace in $\cbb^n_{z}$. We set $M=m+k+r$ and $\cbb^M_\bullet:=\Pi\times\cbb_w^k\times\cbb^r_{\zeta'}$, $G_\bullet:=G \cap (\Pi\times\rbb^k_u)$ and $f_\bullet:=(f_v,f_{\zeta'})|_{G_\bullet}$.

If the graph $\Gamma(f)$ is an $n$-\pscv{} subset of $G\times\rbb^k_v\times\cbb^p_\zeta$, then the graph $\Gamma(f_\bullet)$ is an $m$-\pscv{} subset of $G_\bullet\times\rbb^k_v\times\cbb^r_{\zeta'}$.
\end{lem}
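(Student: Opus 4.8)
The plan is to argue by contraposition: I would assume that $\Gamma(f_\bullet)$ is \emph{not} an $m$-\pscv{} subset of $\Omega_\bullet:=G_\bullet\times\rbb^k_v\times\cbb^r_{\zeta'}$ and manufacture a family of complex analytic sets witnessing that $\Gamma(f)$ is not an $n$-\pscv{} subset of $\Omega:=G\times\rbb^k_v\times\cbb^p_\zeta$. Passing from $\Gamma(f)$ to $\Gamma(f_\bullet)$ changes two things: the $z$-variable is restricted to the $m$-dimensional plane $\Pi$, and the target coordinates $\zeta''$ are dropped. The index bookkeeping fits exactly: the continuity principle governing $n$-\pscv{} subsets in $\cbb^N$ concerns analytic sets of dimension $N-n=k+p$, while the one for $m$-\pscv{} subsets in $\cbb^M_\bullet$ concerns dimension $M-m=k+r$, and the difference $p-r$ is precisely the number of dropped coordinates $\zeta''$. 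I would handle both changes simultaneously by a single lifting construction.

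First I would feed the assumption into the duality machinery, exactly as in the proof of Proposition~\ref{conv-main}. Since $\Omega_\bullet\setminus\Gamma(f_\bullet)$ is not Hartogs $m$-\psc{} relatively to $\Omega_\bullet$, Theorem~\ref{lemNotpsc} furnishes a point $p_0\in\partial_{\Omega_\bullet}\Gamma(f_\bullet)$, a \nbh\ $V_0$ and a strictly $(m-1)$-\psc\ set $G_0$ in $V_0$ such that $\Gamma(f_\bullet)\cap V_0$ touches $\partial G_0$ from the inside of $G_0$ only at $p_0$. Applying Theorem~\ref{lemCofspsc} to $G_0$ at $p_0$ then produces a continuous family $\{A_t\}_{t\in[0,1]}$ of $(M-m)=(k+r)$-dimensional complex submanifolds near $p_0$ with $\cl{A}_t\subset\cbb^M_\bullet\setminus\cl{G_0}$ for $t<1$, with $p_0\in A_1$, and with $\cl{A}_1\setminus\{p_0\}\subset\cbb^M_\bullet\setminus\cl{G_0}$. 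Because $\Gamma(f_\bullet)\cap V_0\setminus\{p_0\}\subset G_0$, the complement of $\cl{G_0}$ avoids $\Gamma(f_\bullet)$ near $p_0$; hence $\cl{A}_t\subset\Omega_\bullet\setminus\Gamma(f_\bullet)$ for $t<1$ and $\cl{A}_1\setminus\{p_0\}\subset\Omega_\bullet\setminus\Gamma(f_\bullet)$, while $p_0\in A_1\cap\Gamma(f_\bullet)$.

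The core of the argument is the lift. Writing $p_0=(z_0,w_0,\zeta_0')$ and $c_*:=f_{\zeta''}(z_0,u_0)$ (the $\zeta''$-components of $f$ at $(z_0,u_0)$), I would set
\[
\tilde A_t:=\{(z,w,\zeta',\zeta'')\in\Pi\times\cbb^k_w\times\cbb^r_{\zeta'}\times\cbb^{p-r}_{\zeta''}:\ (z,w,\zeta')\in A_t,\ \zeta''\in c_*+\Delta^{p-r}_\sigma\},
\]
viewed inside $\cbb^N$ through $\Pi\subset\cbb^n_z$ and the index splitting $\cbb^p_\zeta=\cbb^r_{\zeta'}\times\cbb^{p-r}_{\zeta''}$. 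Each $\tilde A_t$ is a complex submanifold of dimension $(k+r)+(p-r)=k+p=N-n$, the family is again continuous with relatively compact union once $\sigma$ is fixed. A point of $\tilde A_t$ lies on $\Gamma(f)$ only if its projection $(z,w,\zeta')$ lies on $\Gamma(f_\bullet)$ and $\zeta''=f_{\zeta''}(z,u)$; since $A_t$ (for $t<1$) and $\cl{A}_1\setminus\{p_0\}$ avoid $\Gamma(f_\bullet)$, the sets $\cl{\tilde A}_t$ for $t<1$ and the boundary $\partial\tilde A_1$ stay off $\Gamma(f)$. Over the single touching point $p_0$ the point $q_0:=(p_0,c_*)$ lands on $\Gamma(f)$, and because $c_*$ is the centre of the $\zeta''$-polydisc, $q_0$ lies in the interior $\tilde A_1$ and is missed by $\partial\tilde A_1$. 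Choosing $\sigma$ and the \nbh\ of $p_0$ small, the whole family sits in a ball $B$ around $q_0$ in which $\Omega\setminus\Gamma(f)$ is Hartogs $n$-\psc\ (by the relative Definition~\ref{defn:relqpsc}). This configuration violates the continuity principle of property~\ref{eqivqpsc7} in Theorem~\ref{equivqpsc} (applied with $N-n=k+p$), contradicting the hypothesis on $\Gamma(f)$.

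The step I expect to be the main obstacle is exactly the passage through the dropped coordinates $\zeta''$. The naive pullback $A_t\times\cbb^{p-r}$ is unbounded, and the cylinder over $\Gamma(f_\bullet)$ is \emph{strictly larger} than $\Gamma(f)$, so one can neither pull back a defining $q$-\psh\ function (its blow-up locus is only $\Gamma(f)$, not the whole cylinder) nor lift a rigid Hartogs figure as a product (its annular part would let the $z$-parameters escape the small polydisc). Replacing the cylinder by the \emph{bounded} polydisc $c_*+\Delta^{p-r}_\sigma$ centred at the correct value $c_*=f_{\zeta''}(z_0,u_0)$ is precisely what makes the touching survive at $q_0$ while keeping $\partial\tilde A_1$ off $\Gamma(f)$; carefully verifying these incidences, and observing that continuity of $f$ enters only to pin down the single value $c_*$ (so holomorphicity is never needed), is the technical crux.
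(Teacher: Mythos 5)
Your proposal is correct, and its decisive construction is exactly the paper's: both arguments refute the conclusion by producing a continuous family of $(k+r)$-dimensional analytic sets whose members avoid $\Gamma(f_\bullet)$ except that the last one touches it, and then lifting this family to $(k+p)$-dimensional sets by taking the product with the polydisc $\Delta^{p-r}_\sigma\big(f_{\zeta''}(z_0,u_0)\big)$ centred at the value of the dropped components, so that the continuity principle of Theorem~\ref{equivqpsc}~\ref{eqivqpsc7} is violated for the complement of $\Gamma(f)$. The only difference is how the low-dimensional family is obtained. The paper takes a ball $B$ centred at a point of $\Gamma(f_\bullet)$ where relative Hartogs $m$-pseudoconvexity fails and reads the family of analytic sets directly off property~\ref{eqivqpsc7} of Theorem~\ref{equivqpsc}; the touching then automatically occurs on the graph because the ball itself is Hartogs $m$-\psc. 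You instead route through the two duality theorems, Theorem~\ref{lemNotpsc} followed by Theorem~\ref{lemCofspsc}. Your detour buys two genuine conveniences: first, $\cl{A}_1\cap\Gamma(f_\bullet)$ is the single point $p_0$, so \emph{every} radius $\sigma>0$ keeps $\partial\tilde A_1$ off $\Gamma(f)$, whereas in the paper's version the touching set $\cl{A}_1\cap\Gamma(f_\bullet)$ may be larger and the radius $\rho$ must be chosen accordingly (e.g.\ larger than the oscillation of $f_{\zeta''}$ over that set); second, Theorem~\ref{lemCofspsc} places its submanifolds inside an arbitrarily small \nbh\ $W$ of $p_0$, which, together with small $\sigma$, cleanly serves the ``for every ball'' quantifier hidden in Definition~\ref{defn:relqpsc} --- a localization that the paper's proof leaves implicit.

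The one step you assert without justification is that Theorem~\ref{lemNotpsc} delivers a touching point $p_0$ lying on $\Gamma(f_\bullet)$. That theorem concerns a domain failing Hartogs pseudoconvexity \emph{absolutely}; applied to $B\setminus\Gamma(f_\bullet)$ for a small ball $B$ around a bad point of the graph, it only places $p_0$ on $\partial\big(B\setminus\Gamma(f_\bullet)\big)$, which a priori includes $\partial B$. The gap is real but easily closed: if $p_0$ lay on $\partial B\setminus\Gamma(f_\bullet)$, then near $p_0$ the set $V_0\setminus\big(B\setminus\Gamma(f_\bullet)\big)$ coincides with $V_0\setminus B$, and the Basener family supplied by Theorem~\ref{lemCofspsc} would violate the continuity principle for the ball $B$ itself, contradicting the fact that a ball is Hartogs $m$-\psc; hence $p_0$ must lie on $\Gamma(f_\bullet)$. (The paper makes the same tacit use of Theorem~\ref{lemNotpsc} in the relative setting in the proofs of Proposition~\ref{conv-main} and Theorem~\ref{thm-pscv-fol}, so your usage is at least consistent with its conventions, but in a self-contained write-up this point should be spelled out.)
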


\begin{proof} Since the Hartogs $n$-\pscy{} is a local property according to Theorem~\ref{equivqpsc}, after shrinking $G$ if necessary and after a biholomorphic change of coordinates we can assume without loss of generality that $\Pi=\{0\}^{n-m}_{z'}\times\cbb^m_{z''} \subset \cbb_z^n$, where $z'=(z_1,\ldots,z_{n-m})$ and $z''=(z_{n-m+1},\ldots,z_n)$, and that the $\zeta$-coordinates are ordered in such a way that $\zeta'=(\zeta_1,\ldots,\zeta_r)$ and $\zeta''=(\zeta_{r+1},\ldots,\zeta_p)$.

Assume that the complement of $\Gamma(f_\bullet)$ is not Hartogs $m$-\psc{} relatively to $G_\bullet\times\rbb^k_v\times\cbb^r_{\zeta'}$ and recall that $M=m+k+r$. Then, by Definition~\ref{defn:relqpsc}, there are a point $p$ in $\Gamma(f_\bullet)$ and a ball $B$ centered at $p$ in $\cbb^M$ such that the set $(\cbb^M\setminus \Gamma(f_\bullet))\cap B$ is not Hartogs $m$-\psc. Since $B$ is strictly \psc, according to Theorem~\ref{eqivqpsc}~\ref{eqivqpsc7} there is a family $\{A_t\}_{t\in[0,1]}$ of analytic sets $A_t$ in $\cbb^M$ which depends continuously on $t$ and which fulfills the following properties:
\begin{itemize}

\item $\dim_z A_t \geq M-m = k+r$ for every $z \in A_t$ and $t \in [0,1]$, 

\item the closure of the union $\D \bigcup_{t \in [0,1]} A_t$ is compact,

\item for every $t \in [0,1)$ the intersection $\cl{A_t} \cap \Gamma(f_\bullet)$ is empty,

\item $\partial A_1 \cap \Gamma(f_\bullet)$ is empty, as well,

\item and the set $A_1$ touches $\Gamma(f_\bullet)$ at a point $p_0=(z_0,w_0,\zeta'_0)$, where $z_0=(z'_0,z_0'')=(0,z_0'')$ and $w_0=u_0+iv_0$.
\end{itemize}
Given some positive number $\rho$, consider the $(k+p)$-dimensional analytic sets,
\[
S_t:=\{0\}^{n-m}\times A^t \times \Delta_\rho^{p-r}(f_{\zeta''}(z_0,u_0)).
\]
It is easy to verify that the family $\{S_t\}_{t \in [0,1]}$ of $(k+p)$-dimensional analytic sets violates the continuity principle of Theorem~\ref{eqivqpsc} applyied to the complement of $\Gamma(f)$ in $\CC^N=\CC^{n+k+p}$. Hence, $\Gamma(f)$ cannot be Hartogs $n$-\pscv{} relatively to $G\times\RR^k_v\times\CC^p_\zeta$, which is a contradiction to the assumption made on $\Gamma(f)$. Finally, this means that $\Gamma(f_\bullet)$ has to be an $m$-\pscv{} subset of $G_\bullet\times\rbb^k_v\times\cbb^r_{\zeta'}$.
\end{proof}

We are now able to prove the main theorem.

\begin{thm}\label{thm-pscv-fol} Let $n,k,p$ be integers with $n \geq 1$, $p \geq 0$ and $k \in \{0,1\}$ such that $N=n+k+p \geq 2$. Let $G$ be a domain in $\cbb^n_z\times\rbb^k_u$ and let $f: G \nach \rbb^k_v \times \cbb^p_\zeta$ be a \cont \ \fct\ such that $\Gamma(f)$ is an $n$-\pscv\ subset of $G\times\RR^k_v\times\CC^p_\zeta$. Then $\Gamma(f)$ is locally the disjoint union of $n$-dimensional complex submanifolds.
\end{thm}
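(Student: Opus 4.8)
The plan is to reduce the statement to its lowest-dimensional instances by slicing and then to treat those instances by the classical theorems of Hartogs and Levi together with one genuinely new continuity-principle argument. Since Hartogs $n$-pseudoconvexity is a local property (Theorem~\ref{equivqpsc}), it suffices to produce the foliation in a \nbh\ of an arbitrary point of $\Gamma(f)$. The main tool is Lemma~\ref{lemGraphs}: restricting to a complex line $\Pi\subset\cbb^n_z$ (so $m=1$) and to a single target coordinate $\zeta_j$ (so $r=1$) turns the hypothesis into the pseudoconcavity of a graph over a slice, with the pseudoconcavity index dropping from $n$ to $m$. I would split along the value of $k$.

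\textbf{The case $k=0$.} Here $\Gamma(f)$ is the continuous graph of $f_\zeta\colon G\to\cbb^p_\zeta$ over the open set $G\subset\cbb^n_z$; it has real dimension $2n$, so an $n$-dimensional complex leaf can only be the whole graph, and the claim is equivalent to the holomorphy of $f_\zeta$. For each complex line $\Pi$ and each coordinate $\zeta_j$, Lemma~\ref{lemGraphs} shows that the graph of $f_{\zeta_j}|_\Pi$ is $1$-pseudoconcave in $\Pi\times\cbb_{\zeta_j}\cong\cbb^2$, so by Theorem~\ref{thm-hartogs-pscv} the restriction $f_{\zeta_j}|_\Pi$ is \hol. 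Applying this to the lines parallel to the coordinate axes and invoking Hartogs' theorem on separate analyticity, each $f_{\zeta_j}$, hence $f_\zeta$, is \hol\ on $G$, so $\Gamma(f)$ is a single $n$-dimensional complex submanifold.

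\textbf{The case $k=1$.} This has two layers. First I would forget the $\zeta$-coordinates (Lemma~\ref{lemGraphs} with $r=0$, $m=n$) to see that $\Gamma(f_v)$ is $n$-pseudoconcave in $G\times\rbb_v\subset\cbb^n_z\times\cbb_w$; being a continuous real hypersurface with pseudoconvex complement, it is foliated by complex $n$-dimensional submanifolds $\gamma_\lambda$ by the continuous Levi theorem (\cite{Sh93} for $n=1$, \cite{Ch} for $n\ge 2$). If $p=0$ this already is the desired foliation. If $p\ge 1$, it remains to show that for every leaf $\gamma_\lambda$ the \fct\ $f_\zeta\circ\pi_{z,u}$ is \hol\ along $\gamma_\lambda$; then the lifts $\widetilde\gamma_\lambda=\{(q,f_\zeta(\pi_{z,u}(q))):q\in\gamma_\lambda\}$ are pairwise disjoint complex $n$-dimensional submanifolds whose union is $\Gamma(f)$, i.e.\ the sought foliation. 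To prove holomorphy along a leaf I would again slice by complex lines in $z$ and by a single $\zeta_j$, reducing via Lemma~\ref{lemGraphs} to the new case $n=k=p=1$, and then lift holomorphy back to the $n$-dimensional leaf by separate analyticity.

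\textbf{The main obstacle.} The heart of the matter is exactly this new case $n=k=p=1$: a continuous graph $\Gamma(f)\subset\cbb^3$ that is $1$-pseudoconcave, whose $f_v$-part is already foliated by complex curves $\gamma_\lambda$, and where I must upgrade the continuous \fct\ $h:=f_\zeta\circ\pi_{z,u}$ on a leaf $\gamma_0=\{(z,g(z))\}$ to a \hol\ one. The naive idea of restricting a defining \fct\ of the pseudoconcavity to the tube $\gamma_0\times\cbb_\zeta$ fails: by the composition property (Theorem~\ref{thm-equiv-wqpsh}) a defining $1$-\psh\ \fct\ restricts on the $2$-dimensional tube to a \sph\ \fct\ — the top index in dimension two — rather than to the \psh\ \fct\ Hartogs' theorem would require; equivalently, the Kontinuit\"atssatz for $1$-pseudoconvexity in $\cbb^3$ is governed by $2$-dimensional analytic sets, whereas Hartogs on the tube needs $1$-dimensional discs, and any holomorphic thickening of $\gamma_0$ inside $\cbb_z\times\cbb_w$ crosses $\Gamma(f_v)$. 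I would therefore argue by contradiction: assuming $h$ is not \hol, Theorem~\ref{thm-hartogs-pscv} yields a failure of the Kontinuit\"atssatz for the graph of $h$ inside $\gamma_0\times\cbb_\zeta$, which I would realize in $\cbb^3$ as a continuous family of $2$-dimensional analytic sets (for instance graphs $\zeta=\varphi_t(z,w)$ of \hol\ \fcts\ over $2$-discs in $(z,w)$-space), arranged so that their closures avoid $\Gamma(f)$ until the final parameter while their boundaries stay clear of it, contradicting property~\ref{eqivqpsc7} of Theorem~\ref{equivqpsc}. Carrying out this realization — keeping the members of the family off $\Gamma(f)$ until the last moment, with boundaries controlled — is the technical core that I expect to be the hardest part.
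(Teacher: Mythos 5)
Your reduction scheme is essentially the paper's: both use Lemma~\ref{lemGraphs} to peel off coordinates, the classical Hartogs theorem for $k=0$, the continuous Levi theorems of \cite{Sh93} and \cite{Ch} to foliate $\Gamma(f_v)$ when $k=1$, and then reduce the holomorphy of $f_\zeta$ along each leaf to the single new case $n=k=p=1$ (your case $k=0$ goes through lines plus separate analyticity where the paper applies Lemma~\ref{lemGraphs} with $m=n$, $r=1$ and quotes Hartogs directly; this difference is immaterial). You have also correctly diagnosed why the naive restriction argument fails: a defining $1$-\psh\ \fct\ restricted to the two-dimensional tube $\gamma_0\times\cbb_\zeta$ is merely \sph, and the Kontinuit\"atssatz for Hartogs $1$-\pscy\ in $\cbb^3$ is tested by two-dimensional analytic sets, not by the one-dimensional discs that Hartogs' theorem on the tube produces.

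However, at exactly that point your proposal stops being a proof. The case $n=k=p=1$ --- which is the only genuinely new content of the theorem --- is left as a declared intention: you say you would ``realize'' the failure of the Kontinuit\"atssatz for $\Gamma(h)$ inside $\gamma_0\times\cbb_\zeta$ as a continuous family of two-dimensional analytic graphs $\zeta=\varphi_t(z,w)$ avoiding $\Gamma(f)$, and you yourself flag that keeping these sets off $\Gamma(f)$ is the hardest part. It is: any thickening of the one-dimensional discs in the $w$-direction leaves the leaf $\gamma_0$ and sweeps over neighboring leaves $\gamma_\alpha$, above which $\Gamma(f)$ sits at $\zeta$-values $f_\zeta$ that are only \emph{continuous}, so there is no direct way to steer a family of graphs $\zeta=\varphi_t(z,w)$ clear of them; no construction is offered, and I do not see how to produce one along the lines you sketch. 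The paper circumvents this by never constructing such a family explicitly. Instead it runs the contradiction through the duality theorems: Theorem~\ref{lemNotpsc} converts the non-holomorphy of $h=f^0_\zeta$ (via Theorem~\ref{thm-hartogs-pscv}) into a $\ccal^2$-smooth \spsh\ \fct\ $\vrho_1(z,\zeta)$ touching $\Gamma(f^0_\zeta)$ from one side at a single point; the neighboring leaves are then suppressed by adding $\eps\vrho_2(z,w)$ with $\vrho_2=\log|\hat h|$, where $\hat h$ is a \hol\ extension (Grauert--Remmert, \cite{GrRe}) of the \fct\ equal to $1$ on $\gamma_{\alpha_0}$ and $0$ on two nearby leaves $\gamma_{\pm\tau}$, so that $\vrho_2\equiv-\infty$ there; Lemma~\ref{lem-sb} then perturbs $\vrho_1+\eps\vrho_2$ into a \spsh\ defining \fct\ whose zero set touches the compact piece of $\Gamma(f)$ at exactly one point, and Basener's Theorem~\ref{lemCofspsc} finally supplies the family of two-dimensional analytic sets violating property~\ref{eqivqpsc7} of Theorem~\ref{equivqpsc}. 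This mechanism --- one-sided touching by a strictly \psh\ \fct, pinned to a single leaf by a pluriharmonic singularity on the neighboring leaves --- is the idea missing from your proposal, and without it (or a substitute) the central case, and hence the theorem, remains unproved.
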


\begin{proof} The statement is of local nature, so we can assume that $G$ is an open ball in $\cbb_z^n\times\rbb^k_u$ and that $\Gamma(f)$ is bounded. Recall also that Hartogs $n$-\pscy\ in $\CC^{n+1}$ is equivalent to the classical \pscy. 

We separate the problem into the subsequent cases.

\paragraph{\textbf{Case} $\mathbf{n \geq 1}$, $\mathbf{k=0}$, $\mathbf{p=1}$.} This is the classical Hartogs' theorem (see Theorem~\ref{thm-hartogs-pscv} in the Introduction).

\paragraph{\textbf{Case} $\mathbf{n \geq 1}$, $\mathbf{k=0}$, $\mathbf{p\geq 1}$.} For each $j \in \{1,\ldots,p\}$ the graph $\Gamma(f_{\zeta_j})$ is an $n$-\pscv\ subset of $G\times\CC_{\zeta_j}$ in $\CC_{z,\zeta_j}^{n+1}$ due to Lemma~\ref{lemGraphs}, so its complement is pseudoconvex in the classical sense. By Hartogs' theorem, the functions $f_j$ and, therefore, the mapping $f=(f_1,\ldots,f_p)$ are holomorphic which means that $\Gamma(f)$ is an $n$-dimensional complex manifold.

\paragraph{\textbf{Case} $\mathbf{n = 1}$, $\mathbf{k=1}$, $\mathbf{p=0}$.} This was proved by the second author in \cite{Sh93}.

\paragraph{\textbf{Case} $\mathbf{n \geq 1}$, $\mathbf{k=1}$, $\mathbf{p=0}$.} This case has been treated by Chirka in \cite{Ch}.

\paragraph{\textbf{Case} $\mathbf{n = 1}$, $\mathbf{k=1}$, $\mathbf{p = 1}$.} By Lemma~\ref{lemGraphs} the graph $\Gamma(f_v)$ is a $1$-\pscv{} subset of $G\times\RR_v$ in $\cbb^2_{z,w}$, i.e.,~its complement is pseudoconvex in the classical sense. According to \cite{Sh93}, it is locally foliated by a family of holomorphic curves $\{\gamma_\alpha\}_{\alpha \in I}$ represented as graphs of holomorphic functions $g_\alpha$ which are all defined on a disc $D$ in $\cbb_z$ that does not depend on the indexes $\alpha \in I$. Denote by $\pi_z$ the standard projection of points in $\cbb_{z,w}^2$ into $\cbb_z$. We define mappings $f_\zeta^\alpha$ by the assignment
\[
\gamma_\alpha \ni t \ \mapsto \ f_\zeta^\alpha(t):=f_\zeta\big(\pi_z(t),\mathrm{Re}(g_\alpha)(\pi_z(t))\big).
\]
Since for each $\alpha \in I$ the curve $\gamma_\alpha$ is represented by the graph $\Gamma(g_\alpha)$, the \fct\ $f^\alpha_\zeta:\gamma_\alpha \to \cbb_\zeta$ is well-defined, and its graph is given by $\Gamma(f^\alpha_\zeta)=\Gamma(f|_{\pi_{z,u}(\gamma_\alpha)})$. Here, $\pi_{z,u}$ means the standard projection of $\cbb^2_{z,w}$ to $\cbb_z\times\rbb_u$. We claim that $\Gamma(f^\alpha_\zeta)$ is indeed a holomorphic curve.

In order to verify the claim we argue by contradiction and assume that there is an $\alpha_0$ such that the mapping $f_\zeta^{\alpha_0}$ is not holomorphic in a \nbh \ of a point $(z_0,w_0) \in \gamma_{\alpha_0}$. After a local holomorphic change of coordinates, we can assume that $\gamma_{\alpha_0}=\Delta_r(z_0) \times \{ w=0\}$, where $\Delta_r(z_0)\relc D$ is a disc in $\cbb_z$ centered in $z_0$. After a reparametrization we can arrange that $\alpha_0=0$ and $(-1,1)\subset I$. Since the curve $\gamma_{0}$ is of the form $\Delta_r(z_0) \times \{ w=0\}$ near $z_0$, we can treat $f_\zeta^{0}$ as a function $f^0_\zeta:\Delta_r(z_0) \nach \cbb_\zeta$. By our assumption that $f^0_\zeta$ is not holomorphic, in view of Hartogs' Theorem \ref{thm-hartogs-pscv}, it follows that the set $\Gamma(f^0_\zeta)$ is not a $1$-\pscv\ subset of $\Delta_r(z_0)\times\CC_\zeta$ in $\cbb^2_{z,\zeta}$. Then, by Theorem~\ref{lemNotpsc}, there exist a point $p_1=(z_1,\zeta_1) \in \Gamma(f^0_\zeta)$, a small enough open \nbh\ $V$ of $p_1$ in $\cbb^2_{z,\zeta}$, a $\ccal^2$-smooth \spsh\ \fct\ $\vrho_1=\vrho_1(z,\zeta)$ on $V$ with $\nabla\vrho_1\neq 0$, and radii $\sigma,r',r''>0$ with $r''<r'<r$ such that
\bea
& \Delta_{r'}\times\Delta_\sigma \relc V, \qquad \Gamma(f^0_\zeta|_{\cl{\Delta_{r'}}}) \subset \cl{\Delta_{r'}\times\Delta_\sigma},& \nonumber\\
\label{eq-pscv-gr-000} \\
& \Gamma(f^0_\zeta|_{\cl{\Delta_{r'}}}) \subset \{\vrho_1 \leq 0\}, \qquad \Gamma(f^0_\zeta|_{\cl{\Delta_{r'}}}) \cap \{\vrho_1= 0\} = \{(z_1,\zeta_1)\},& \nonumber
\eea
\bea
& \and \ \Gamma(f^0_\zeta|_{\cl{A_{r',r''}}}) \cap \{\vrho_1= 0\} = \emptyset,& \label{eq-pscv-gr-001}
\eea
where each disc $\Delta_s$ in $\cbb_z$ mentioned above is assumed to be centered at $z_1$, $A_{r',r''}:=\Delta_{r''}\setminus\cl{\Delta_{r'}}$ and the disc $\Delta_\sigma \subset \cbb_{\zeta}$ is assumed to be centered at $\zeta_1$. For $\alpha \in (-1,1)$ we set $\gamma'_\alpha:=\Gamma(g_\alpha|_{\overline{\Delta_{r'}}})$ and $\Gamma'_\alpha:=\Gamma(f^\alpha_\zeta|_{\gamma'_\alpha})$. Since $f$ is \cont, and since the family $\{\gamma_\alpha\}_{\alpha \in I}$ depends continuously on $\alpha$, it follows from~\eqref{eq-pscv-gr-001} that there is a number $\tau \in (0,1)$ such that
\begin{eqnarray}
& K:=\bigcup_{\alpha \in [-\tau,\tau]} \Gamma_\alpha' \ \subset \ \Delta_{r'}\times\cbb_w\times\Delta_{\sigma}& \nonumber\\
\label{eqrho2}\\
& \and \ \varrho_1 < 0 \ \mathrm{on} \  \Gamma'_\alpha\cap \left(\cl{A_{r',r''}} \times \cbb^2_{w,\zeta} \right) \fe \alpha \in [-\tau,\tau],& \nonumber
\end{eqnarray}
where $\vrho_1$ is now considered as a \fct\ defined on $\{(z,w,\zeta) \in \cbb^3 : (z,\zeta) \in V\}$.

Since the curves in the familiy $\{\gamma_\alpha\}_{\alpha \in I}$ are holomorphic, the set
\[
A:=\big(\gamma_{-\tau}\cup\gamma_{\alpha_0}\cup\gamma_{\tau}\big) \cap (\Delta_{r}\times\cbb_w)
\]
is a closed analytic subset of the \psc\ domain $\Delta_{r}\times\cbb_w$. Let $h$ be a holomorphic \fct\ on $A$ defined by $h\equiv 0$ on $\gamma_{\pm\tau}$ and $h \equiv 1$ on $\gamma_{\alpha_0}$. Then there exists a holomorphic extension $\hat{h}$ of $h$ into the whole of $\Delta_{r}\times\cbb_w$ (see Theorem 4 in Paragraph 4.2 of Chapter V in \cite{GrRe}). Hence, the \fct\ $\vrho_2(z,w):=\log|\hat{h}(z,w)|$ is \psh\ on $\Delta_{r}\times\cbb_w$ and satisfies $\vrho_2\equiv -\infty$ on $\gamma_{\pm \tau}$. Now for $\eps>0$ we define
\[
\psi_0(z,w,\zeta):=\varrho_1(z,\zeta) + \eps \varrho_2(z,w),
\]
where $\vrho_1$ is the defining \fct\ from above. By the inequality (\ref{eqrho2}) and the properties of $\vrho_2$, for a sufficiently small $\eps>0$ we obtain that
\bea
\psi_0 <0 \ \mathrm{on} \ L:= \bigcup_{\alpha \in [-\tau,\tau]}\big(\Gamma'_\alpha\cap \left(\cl{A_{r',r''}} \times \cbb^2_{w,\zeta} \right)\big) \cup \Gamma'_{\tau} \cup \Gamma'_{-\tau}.\label{eqrho3}
\eea
By the choice of the point $(z_1,\zeta_1)$ above and by the inclusion $(z_1,0) \in \gamma_{\alpha_0}$, we have that $\vrho_1(z_1,\zeta_1)=0$, $\vrho_2(z_1,0)=0$ and, therefore, $\psi_0(z_1,0,\zeta_1)=0$. Since $(z_1,0,\zeta_1)$ belongs to $K$, it follows from the inequality~\eqref{eqrho3} that $\psi_0$ attains a non-negative maximal value on $K$ outside $L$. Since $\psi_0$ is \psh\ on a \nbh\ of $K$, by using standard methods on the approximation of \psh\ functions we can assume without loss of generality that $\psi_0$ is $\ccal^\infty$-smooth and strictly \psh\ on a \nbh\ of $K$, satisfies the property \eqref{eqrho3} and still attains its maximum on $K$ outside $L$.

Now it is easy to verify that $K, L$ and $\psi:=\psi_0-\max_K \psi_0$ fulfill all the conditions (1) to (3) of Lemma~\ref{lem-sb}. Thus, there exist a point $p_2$ in $K\setminus L$, a \nbh\ $U$ of $p_2$ containing $K$ and $L$ and a $\ccal^2$-smooth \spsh\ \fct\ $\vphi$ on $U$ so that $U':=\{(z,w,\zeta) \in U : \vphi(z,w,\zeta)<0\}$ is strictly \psc\  relatively to $U$, $\vphi<0$ on $L$, $\vphi \leq 0$ on $K$ and $\vphi(z,w,\zeta)$ vanishes on $K$ if and only if $(z,w,\zeta)=p_2$. Since $U'$ is \spsc\ at $p_2$, we derive from Theorem~\ref{lemCofspsc} and the continuity principle in Theorem~\ref{equivqpsc} that the graph $\Gamma(f)$ cannot be a $1$-\pscv{} subset of $G\times\RR_v\times\CC_\zeta$ in $\cbb^3_{z,w,\zeta}$, which is a contradiction to the assumption made on $\Gamma(f)$. As a conclusion, the curves in $\{\Gamma(f^\alpha_\zeta)\}_{\alpha \in I}$ have to be holomorphic. This leads to the desired local complex foliation of $\Gamma(f)$.

\paragraph{\textbf{Case} $\mathbf{n \geq 1 }$, $\mathbf{k=1}$, $\mathbf{p=1}$.} According to Lemma~\ref{lemGraphs} with $m=n$ and $r=0$, the graph $\Gamma(f_v)$ is an $n$-\pscv{} subset of $G\times\rbb_v$ in $\CC^{n+1}_{z,u+iv}$, where $G$ is a ball in $\cbb^n_z\times\rbb_u$. Hence, by Chirka's result (see the case $n \geq 1$, $k=1$, $p=0$), the graph $\Gamma(f_v)$ is foliated by a family $\{A_\alpha\}_{\alpha \in I}$ of holomorphic hypersurfaces $A_\alpha$. For $\alpha \in I$ define the \fct
\[
f^\alpha_\zeta : A_\alpha \nach \cbb_\zeta \quad \mathrm{by} \quad f^\alpha_\zeta = f_\zeta|_{\pi_{z,u}(A_\alpha)}
\]
and identify $\Gamma(f^\alpha_\zeta)$ with $\Gamma(f|_{\pi_{z,u}(A_\alpha))}$. Suppose that some \fct\ $f^{\alpha_0}_\zeta$ is not holomorphic. Then, by Hartogs' theorem of separate holomorphicity, there is a complex one-dimensional curve $\sigma_{\alpha_0}$ in $A_{\alpha_0}$ on which $f^{\alpha_0}_\zeta$ is not holomorphic near a point $p_0 \in \sigma_{\alpha_0}$. After a change of coordinates we can assume that $p_0=0$, $f^{\alpha_0}_\zeta(0)=0$ and $\sigma_{\alpha_0}=\Delta\times\{ z_2=\ldots=z_n=w=0\}$ in a \nbh \ of 0, where $\Delta$ is the unit disc in $\cbb_{z_1}$. We set $\lbb:=\cbb_{z_1}\times\{0\}^{n-1}$. By Lemma~\ref{lemGraphs} the graph $\Gamma(f_\bullet)$ of $f_\bullet:=f|_{G_\bullet}$ is a 1-\pscv{} subset of $G_\bullet\times\RR_v\times\CC_\zeta$ in $\cbb^3_{z_1,w,\zeta}$, where $G_\bullet:= (G\cap(\lbb\times\rbb_u))$. Thus, in view of the considered above case $n=k=p=1$, the graph $\Gamma(f_\bullet)$ is foliated by complex curves of the form
\[
(f_\bullet)^\beta_\zeta:\gamma_\beta \nach \cbb_\zeta \quad \mathrm{with} \quad (f_\bullet)_\zeta^\beta=(f_\bullet)_\zeta|_{\pi_{z,u}(\gamma_\beta)},
\]
where $\{\gamma_\beta\}_{\beta \in I}$ is a family of holomorphic curves of a foliation of $\pi_{z_1,w}(\Gamma(f_\bullet))$. From the uniqueness of the foliation on $\pi_{z_1,w}(\Gamma(f_\bullet))$ we deduce that $\pi_{z_1,w}(\sigma_{\alpha_0})$ coincides (at least locally) with a curve $\gamma_{\beta_0}$ containing 0. Hence, in some \nbh\ of 0 we have that $\gamma_{\beta_0}=\Delta\times\{ 0\}$ and therefore
\begin{eqnarray*}
f^{\alpha_0}_\zeta |_{\sigma_0} & = & f_\zeta|_{\Delta\times\{ z_2=\ldots=z_n=0\}\times\{u=0 \}}\\
& = & (f_\bullet)_\zeta|_{\Delta\times\{u=0\}} = (f_\bullet)_\zeta|_{\pi_{z,u}(\gamma_{\beta_0})} = (f_\bullet)_\zeta^{\beta_0}.
\end{eqnarray*}
This means that $f^{\alpha_0}_\zeta$ has to be holomorphic on a \nbh \ of 0 in $\sigma_{\alpha_0}$, which is a contradiction to the choice of $f_\zeta^{\alpha_0}$ and $\sigma_{\alpha_0}$. Hence, $\{\Gamma(f^\alpha_\zeta)\}_{\alpha \in I}$ is the desired foliation of~$\Gamma(f)$.

\paragraph{\textbf{Case} $\mathbf{n = 1}$, $\mathbf{k=1}$, $\mathbf{p \geq 1}$.} We derive from Lemma~\ref{lemGraphs} with $m=1$ and $r=0$ that the graph $\Gamma(f_v)$ is a 1-\pscv{} subset of $G\times\RR_v$ in $\cbb^2_{z,w}$. It follows then from the theorem of the second author (see case $n=k=1$, $p=0$ above) that the graph $\Gamma(f_v)$ is foliated by the family $\{\gamma_\alpha\}_{\alpha \in I}$ of holomorphic curves~$\gamma_\alpha$. Define similarly to the previous cases for $\alpha \in I$ the mapping
\bea \label{case-last}
f_\zeta^\alpha=(f^\alpha_{\zeta_1},\ldots,f^\alpha_{\zeta_p}) : \gamma_\alpha \nach \cbb_{\zeta}^p \quad \mathrm{by} \quad f^\alpha_\zeta := f_\zeta|_{\pi_{z,u}(\gamma_\alpha)}.
\eea
Since for each $j \in \{1,\ldots,p\}$ the graph $\Gamma(f_v,f_{\zeta_j})$ is a $1$-\pscv{} subset of $G\times\RR_v\times\CC_{\zeta_j}$ in $\cbb^3_{z,w,\zeta_j}$ due to Lemma~\ref{lemGraphs} with $m=1$ and $r=1$, it follows by the same arguments as in the case $n=k=p=1$ that the component $f^\alpha_{\zeta_j}:\gamma_\alpha \nach \cbb_{\zeta_j}$ is holomorphic. Hence, the curve $f^{\alpha}_\zeta$ is holomorphic, as well, so that $\Gamma(f)$ is foliated by the family $\{\Gamma(f_\zeta^\alpha)\}_{\alpha\in I}$ of holomorphic curves.

\paragraph{\textbf{Case} $\mathbf{n \geq 1}$, $\mathbf{k=1}$, $\mathbf{p \geq 1}$.} The proof is nearly the same as in the previous case $n=k=1$, $p \geq 1$. We only need to replace the curves $\{\gamma_\alpha\}_{\alpha \in I}$ in \eqref{case-last} by complex hypersurfaces $\{H_\alpha\}_{\alpha \in I}$ obtained from Chirka's result (case $n \geq 1$, $k=1$, $p=0$) and to apply the case $n\geq 1$, $k=1$, $p=1$ to each $j=1,\ldots,p$ in order to show that $f^\alpha_{\zeta_j}:H_\alpha \to \cbb_{\zeta_j}$ is holomorphic on $H_\alpha$. Then $\{\Gamma(f^\alpha_\zeta)\}_{\alpha \in I}$ is a complex foliation of~$\Gamma(f)$.

The proof of our main theorem is finally complete.
\end{proof}

So far, we do not have techniques to treat the case $n=1$, $k=2$ and $p \geq0$. The next example from \cite{JKS} shows that it is not always possible to foliate a 1-\pscv\ real 4-dimensional submanifold in $\cbb^3$ by complex submanifolds, but it is still possible to do this by analytic subsets.\footnote{Thanks to Prof. Kang-Tae Kim for attracting our attention to this example.}

\bex For a fixed integer $k \geq 2$ consider the \fct\
\[
f(z_1,z_2):=\left\{ \begin{array}{ll} \ol{z}_1 {z_2}^{2+k}/\ol{z}_2, & z_2 \neq 0 \\ 0, & z_2=0 \end{array}\right. .
\]
It is $\ccal^k$-smooth on $\cbb^2$ and holomorphic on complex lines passing through the origin, since $f(\lambda v)=\lambda^{2+k}f(v)$ for every $\lambda \in \cbb^*:=\cbb{\setminus}\{0\}$ and each vector $v \in \cbb$. Therefore, in view of \cite{Ba}, the \fct\ $f$ is 1-\hol\ on $\cbb^2$, so $\psi(z_1,z_2,w):=-\log|f(z_1,z_2)-w|$ is 1-\psh\ outside $\{ f=w\}$ by \cite{HM}. Due to Theorem \ref{equivqpsc}, this means that the graph $\Gamma(f)$ of $f$ is a 1-\pscv\ real 4-dimensional submanifold of $\cbb^3$ which does not admit a regular foliation near the origin, but admits a singular one which is given by the family of holomorphic curves $\{\Gamma(f|_{\cbb^*v)}\}_{v \in \cbb^2}$. Of course, the problem arises because the complex Jacobian of $f$ has non-constant rank near the origin.
\eex

\bibliographystyle{alpha}

\end{document}